\newtheorem{theorem}{Theorem}[section]
\newtheorem{lemma}[theorem]{Lemma}
\newtheorem{proposition}[theorem]{Proposition}
\newcounter{claims}[theorem]
\newtheorem{claim}[claims]{Claim}
\theoremstyle{definition}
\newtheorem{definition}[theorem]{Definition}
\theoremstyle{remark}
\newtheorem{remark}[theorem]{Remark}
\newcommand{\mc}[1]{\mathcal{#1}}
\newcommand{\N}{\mathbb{N}}
\newcommand{\Z}{\mathbb{Z}}
\newcommand\mathcircled[1]{%
  \mathpalette\@mathcircled{#1}%
}
\newcommand\@mathcircled[2]{%
  \tikz[baseline=(math.base)] \node[draw,circle,inner sep=1pt] (math) {$\m@th#1#2$};%
}
\begin{document}

\title{Characterizations of Cancellable Groups}
\author{Matthew Harrison-Trainor\thanks{Supported by an NSERC Banting Fellowship.},  Meng-Che ``Turbo" Ho}

\maketitle

\begin{abstract}
	An abelian group $A$ is said to be cancellable if whenever $A \oplus G$ is isomorphic to $A \oplus H$, $G$ is isomorphic to $H$. We show that the index set of cancellable rank 1 torsion-free abelian groups is $\Pi^0_4$ $m$-complete, showing that the classification by Fuchs and Loonstra cannot be simplified. For arbitrary non-finitely generated groups, we show that the cancellation property is $\Pi^1_1$ $m$-hard; we know of no upper bound, but we conjecture that it is $\Pi^1_2$ $m$-complete.
\end{abstract}

\section{Introduction}

In his book \textit{Infinite Abelian groups} \cite{Kaplansky54}, Kaplansky proposed three test problems as criteria for a ``satisfactory classification theorem''. The third test problem asks: For abelian groups, if $A \oplus G \cong A \oplus H$ and $A$ is finitely generated, is $G \cong H$? In other words, can one cancel finitely generated groups from direct sums? Walker \cite{Walker56} and Cohn \cite{Cohn56} independently answered this question in the affirmative in 1956. In particular, $\mathbb{Z}$ is cancellable, but $\mathbb{Q}$ is not.

All of the groups in this paper will be abelian. In general, we say that $A$ has the \textit{cancellation property}, or is \textit{cancellable}, if it can be cancelled from direct sums. Rotman and Yen \cite{RotmanYen61}, Crawley \cite{Crawley65}, Hs\"u \cite{Hsu62}, and Kaplansky \cite{Kaplansky52} showed that certain other countable but non-finitely-generated groups also have the cancellation property. Finally, Fuchs and Loonstra \cite{FuchsLoonstra71} gave a complete characterization of the cancellable rank-one abelian groups (i.e., the subgroups of $\mathbb{Q}$) in terms of the endormorphism ring of the group.

\begin{definition}\label{def:sr1}
A ring $R$ \emph{has $1$ in the stable range} if whenever $f_1,f_2,g_1,g_2 \in R$ satisfy $f_1 g_1 + f_2 g_2 = 1$, there is $h \in R$ such that $f_1 + f_2 h$ is a unit of $R$.
\end{definition}

\noindent In the rest of this article, we will write $E(G)$ for the endomorphism ring of an abelian group $G$.

\begin{theorem}[Fuchs and Loonstra \cite{FuchsLoonstra71}, see Theorem 8.12 of \cite{Arnold82}]\label{thm:rank-one-char}
Let $G$ be a rank 1 torsion-free abelian group. Then $A$ is cancellable if and only if $G \cong \mathbb{Z}$ or $E(G)$ has 1 is in the stable range.
\end{theorem}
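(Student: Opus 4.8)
The plan is to prove the two implications separately, after unwinding the shape of $E(G)$. Since $G$ has rank $1$, every endomorphism of $G$ extends uniquely to a $\mathbb{Q}$-linear endomorphism of $G\otimes\mathbb{Q}\cong\mathbb{Q}$, hence is multiplication by a rational; so $R:=E(G)$ is the subring $\{q\in\mathbb{Q}:qG\subseteq G\}$ of $\mathbb{Q}$, namely $\mathbb{Z}$ localized by inverting exactly the primes at which $G$ is divisible. In particular $R$ is a (possibly improper) localization of $\mathbb{Z}$, hence a PID, $G$ is naturally a rank-$1$ torsion-free $R$-module, and a rank-$1$ torsion-free group is finitely generated if and only if it is isomorphic to $\mathbb{Z}$; so in the alternative ``$G\cong\mathbb{Z}$ or $E(G)$ has $1$ in the stable range,'' the first option is the same as ``$G$ is finitely generated.''

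For the direction ($\Leftarrow$): if $G\cong\mathbb{Z}$ then $G$ is cancellable by the Walker--Cohn theorem on finitely generated groups. If instead $R=E(G)$ has $1$ in the stable range, I would invoke the general principle, due to Evans and Warfield, that a module whose endomorphism ring has $1$ in the stable range has the substitution property, hence in particular is cancellable: $G\oplus H\cong G\oplus K$ forces $H\cong K$ for arbitrary abelian groups $H,K$. The only point needing care is the identification of the $\mathbb{Z}$-endomorphism ring of $G$ with the subring of $\mathbb{Q}$ above, which is a routine computation with height sequences; note that this half uses nothing special about rank $1$ once $E(G)$ is known.

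The direction ($\Rightarrow$) is the substantive one. Assume $G$ is cancellable and $G\not\cong\mathbb{Z}$, equivalently $G$ is not finitely generated. Arguing contrapositively, suppose $R=E(G)$ does not have $1$ in the stable range, so there are $f_1,f_2,g_1,g_2\in R$ with $f_1g_1+f_2g_2=1$ but $f_1+f_2h$ a non-unit of $R$ for every $h\in R$ (note $f_2\neq 0$, else $h=0$ works); I must produce abelian groups $H\not\cong K$ with $G\oplus H\cong G\oplus K$. The mechanism is that a unimodular row $(a,b)$ over $R$, with right inverse $(a',b')$, gives a short exact sequence $0\to K_{(a,b)}\to G\oplus G\to G\to 0$ whose surjection sends $(x,y)$ to $ax+by$ and which splits via $z\mapsto(a'z,b'z)$ (using $a',b'\in R$); hence $G\oplus K_{(a,b)}\cong G\oplus G$, where $K_{(a,b)}$ is rank-$1$ torsion-free and, by a direct computation inside $\mathbb{Q}$, is isomorphic to $\{x\in G:(a/b)x\in G\}$. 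Elementary transformations and unit rescalings of the row alter $K_{(a,b)}$ only up to isomorphism; the defining stable-range condition ``$f_1+f_2h$ a unit for some $h$'' is exactly what would let one transform $(f_1,f_2)$ into $(1,0)$, making $K_{(f_1,f_2)}\cong K_{(1,0)}\cong G$, so the failure of that condition is what one hopes obstructs $K_{(f_1,f_2)}\cong G$ and thereby breaks cancellation.

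I expect the main obstacle to be precisely this obstruction: showing that if $K_{(f_1,f_2)}\cong G$ as abelian groups then the row $(f_1,f_2)$ is reducible, hence producing the forbidden $h\in R$. An isomorphism of rank-$1$ torsion-free groups is a priori only additive, so one must argue it is forced to be multiplication by a unit of the relevant endomorphism ring, compute the $\operatorname{Hom}$'s between the various rank-$1$ groups involved via their height sequences, and in effect establish the Bass/Vaserstein-style equivalence between cancellability of the non-finitely-generated group $G$ and surjective stability of unimodular rows over $E(G)$. Alternatively one can read the rank-$1$ statement off Arnold's general criterion for cancellability of finite-rank torsion-free groups in terms of the stable range of the endomorphism ring (\cite{Arnold82}, Theorem 8.12), checking separately the $\mathbb{Z}$ exception: there $E(\mathbb{Z})=\mathbb{Z}$ does not have $1$ in the stable range (no $h\in\mathbb{Z}$ makes $6+35h$ a unit, although $6\cdot 6+35\cdot(-1)=1$), yet $\mathbb{Z}$ is cancellable by Walker--Cohn, the relevant modules being free.
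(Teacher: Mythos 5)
The paper does not prove this theorem: it is quoted from Fuchs--Loonstra (via Theorem 8.12 of \cite{Arnold82}) and used as a black box, so there is no internal proof to compare against and your proposal must stand on its own. Your identification of $E(G)$ with a localization of $\mathbb{Z}$ and your ($\Leftarrow$) direction are fine: $G\cong\mathbb{Z}$ is Walker--Cohn, and the stable-range case follows from Evans' substitution theorem, which is exactly what the paper itself invokes (Theorem 2 of \cite{Evans73}) in its infinite-rank section. Note, however, that your fallback of ``reading the statement off Arnold's Theorem 8.12'' is circular, since that theorem \emph{is} the statement being proved.

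The genuine gap is in ($\Rightarrow$). Your candidate complement $K_{(a,b)}=\ker(G\oplus G\to G)$, $(x,y)\mapsto ax+by$, is, as you yourself compute, isomorphic to $\{x\in G:(a/b)x\in G\}=G\cap(b/a)G$ inside $\mathbb{Q}$. This is a rank-$1$ group whose height sequence agrees with that of $G$ at every prime where $G$ is $p$-divisible and differs from it at only the finitely many primes dividing the numerator or denominator of $a/b$, and there only by the finite amounts $v_p(a/b)$. Hence it has the same type as $G$ and so $K_{(a,b)}\cong G$ for \emph{every} unimodular row, reducible or not: the obstruction you hope for does not exist, and this construction can never break cancellation. (This is consistent with the fact that $E(G)$ is a PID, over which unimodular rows detect nothing at the level of projective modules --- the same reason $\mathbb{Z}$ cancels even though $E(\mathbb{Z})=\mathbb{Z}$ fails stable range $1$; more generally, rank-$1$ complements of a rank-$1$ group always cancel, by the near-isomorphism theory.) The actual Fuchs--Loonstra/Arnold counterexamples are \emph{rank-$2$} almost completely decomposable groups: for $n$ coprime to the multiplicative set $M$ of primes inverted by $E(G)$ and for $r$ coprime to $n$, one takes $X_r=Gx+Gy+E(G)\cdot\frac{1}{n}(x+ry)\subseteq\mathbb{Q}^2$, shows $G\oplus X_r\cong G\oplus X_{r'}$ for all such $r,r'$, and shows that an isomorphism $X_r\cong X_{r'}$ forces a congruence $r\equiv \pm u r'\pmod{n}$ with $u$ a ratio of elements of $M$. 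The Estes--Ohm reformulation of stable range $1$ (Lemma 7.3 of \cite{EstesOhm67}, restated in Section 3 of this paper) says precisely that such congruences are always solvable, so its failure produces $B=X_1\not\cong C=X_r$ with $G\oplus B\cong G\oplus C$. Your proposal is missing this rank-$2$ construction entirely, and no rank-$1$ construction can replace it.
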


However, this is not a simple condition. The endomorphism ring of any subgroup $G$ of $\mathbb{Q}$ is a localization $\mathbb{Z}_S$ of $\mathbb{Z}$ at a set of primes $S$, and Estes and Ohm \cite{EstesOhm67} highlight two possible extreme cases: (i) $S$ contains all but finitely many primes of $\mathbb{Z}$, and (ii) $S$ contains only finitely many primes. In case (i), they prove that $\mathbb{Z}_S$ has 1 in the stable range, and in case (ii) that 1 is not in the stable range. But then Estes and Ohm give examples which are of neither type (i) nor type (ii) which show that for such rings both possibilities can occur. They conclude that ``the problem of a complete classification of overrings of $\mathbb{Z}$ having 1 in the stable range remains open''. Similarly, Arnold \cite{Arnold82} says that ``rings with 1 in the stable range are not easily characterized''. In Section \ref{sec:sr1}, we will give formal justification to these statements, though the statements we prove are somewhat technical; but the conclusion is that one cannot give a classification better than the statement in Definition \ref{def:sr1}.\footnote{The computability theorist will note that (i) and (ii) allow us to use the fact that $(\Sigma^0_2,\Pi^0_2) \leq_1 (\text{Fin},\text{Cof})$ to show that the property of having stable rank 1 is $\Pi^0_2$-complete, where $\text{Fin}$ and $\text{Cof}$ are the indices of finite and infinite c.e.\ sets respectively. This does not really give the whole picture though. For strongly computable subrings $R$ of $\mathbb{Q}$ (i.e., ones where given $q \in \mathbb{Q}$ we can decide whether $q \in R$) this argument would only give that having 1 in the stable range is $\Pi^0_1$-hard, because the indices of finite and infinite computable sets are both $\Sigma^0_2$. But we show in Section \ref{sec:sr1} having 1 in the stable range is $\Pi^0_2$-hard.} Moreover, we prove formally that there is no better characterization of the cancellable rank 1 torsion-free abelian groups than Theorem \ref{thm:rank-one-char}: to decide whether $G$ has the cancellation property, we must naively check whether $E(G)$ satisfies Definition \ref{def:sr1}.

To formalize these statements, we use the tools of computable structure theory. Suppose $P$ is a certain property of algebraic structures; in our case, $P(A)$ is having the cancellation property, but in general $P(A)$ could be some other property such as being a free group, or being finitely generated. Using a universal Turing machine, produce an effective listing of all computable structures, indexed by natural numbers. The complexity of the property is reflected in the complexity of its \emph{index set}
\[ I_{P} = \{ i \in \mathbb{N} \mid \text{the $i$th computable structure has property $P$}\}. \]
The complexity of $I_P$ can be formally measured using various computability-theoretic hierarchies such as the arithmetical and analytical hierarchies. These hierarchies consist of complexity classes which are the analogues in computability theory to classes such as $P$ and $NP$ in complexity theory. The classes correspond to the number and type of quantifiers required to solve the problem. We show some of the complexity classes below. 

\[
\xymatrix@=7pt{
& \Sigma^0_1 \ar[dr]& &\Sigma_2^0\ar[dr]&& \Sigma^0_3 \ar[dr] &&&& \Sigma^1_1 \ar[dr]& &\Sigma_2^1\ar[dr]& \\
\Delta^0_1 \ar[ur]\ar[dr] &  & \Delta^0_2 \ar[ur]\ar[dr]&&\Delta^0_{3} \ar[ur] \ar[dr] & & \Delta^0_4 \ar[ur] \ar[dr] & \cdots & \Delta^1_1 \ar[ur]\ar[dr] &  & \Delta^1_2 \ar[ur]\ar[dr]&& \cdots \\
& \Pi^0_1 \ar[ur] &&\Pi_2^0\ar[ur]&& \Pi^0_3\ar[ur] &&&& \Pi^1_1 \ar[ur] &&\Pi_2^1\ar[ur]&
}
\]

If $A$ is a finite-rank group, $A$ is cancellable if for all groups $G$ and $H$ and isomorphisms $A \oplus G \to A \oplus H$, there is an isomorphism $G \to H$. This is, on the face of it, a property that requires quantifiers over sets; it is of the form ``for all ... there exist ... such that ...'' where the quantifiers are over groups and functions (i.e., over sets) rather than natural numbers. However, Theorem \ref{thm:rank-one-char} gives a characterization that is simpler. If we unpack the definition of the endomorphism ring (for a finite-rank group), this characterization takes the form ``for all ... there exist ... such that for all ... there exist ... such that ...'' where these quantifiers are all over elements of $G$ (which we can identify with natural numbers). Such a property is $\Pi^0_4$. To see that the characterization can be written in this way, note that the endomorphism ring of $G \subseteq \mathbb{Q}$ can be identified with those elements of $G$ all of whose powers are also in $G$ (see Lemma \ref{lem:end-group}). We show that there is no simpler characterization, by showing that the index set cannot be simpler than $\Pi^0_4$; a simpler characterization would result in the index set being simpler than $\Pi^0_4$.

\begin{theorem}\label{thm:main-fr}
	The index set
	\[ I_{C} = \{ i \mid \text{the $i$th computable group has the cancellation property} \} \]
	of cancellable torsion-free rank 1 groups is $\Pi^0_4$ $m$-complete. Moreover, this relativizes.
\end{theorem}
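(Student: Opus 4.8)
The plan is to prove the two halves separately: the upper bound is a quantifier count on top of Theorem~\ref{thm:rank-one-char}, and the lower bound is an $m$-reduction from a $\Pi^0_4$-complete set built on a number-theoretic reformulation of Definition~\ref{def:sr1}. For the upper bound, by Theorem~\ref{thm:rank-one-char} and Lemma~\ref{lem:end-group} a torsion-free rank $1$ group $G$ is cancellable iff $G\cong\mathbb Z$ or $E(G)$ has $1$ in the stable range, where $E(G)$ is (isomorphic to) the set of $g\in G$ all of whose powers lie in $G$. In a computable copy of $G$ the relation ``$g\in E(G)$'' is $\Pi^0_2$ --- it reads $\forall n\,(g^n\in G)$, and ``$g^n\in G$'' is $\Sigma^0_1$ --- while the ring operations on $E(G)$ and the unit relation are computable from the group operations. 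Writing out Definition~\ref{def:sr1},
\[
\forall f_1,f_2,g_1,g_2\;\Big[\big(\textstyle\bigwedge_i f_i,g_i\in E(G)\ \wedge\ f_1g_1+f_2g_2=1\big)\to\exists h,u\,\big(h,u\in E(G)\ \wedge\ (f_1+f_2h)u=1\big)\Big],
\]
whose consequent is $\exists(\Pi^0_2)=\Sigma^0_3$ and whose antecedent is $\Pi^0_2$, so the matrix is $\Sigma^0_3$ and the whole statement is $\Pi^0_4$; since ``$G\cong\mathbb Z$'' is $\Sigma^0_3$ and ``the structure is a torsion-free rank $1$ group'' is $\Pi^0_2$, we get $I_C\in\Pi^0_4$. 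Every step uses only the (relativized) atomic diagram, so this relativizes.

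For the lower bound I would fix a $\Pi^0_4$-complete set $A=\{e:\forall n\,Q(e,n)\}$ with $Q(e,n)=\exists m\,P(e,n,m)$ and $P$ a $\Pi^0_2$ relation; such an $A$ can be taken $\Pi^0_4$-complete in this normal form, with the uniform pair-reduction properties of $(\mathrm{Fin},\mathrm{Cof})$ at one's disposal. Uniformly in $e$ I build a computable torsion-free subgroup $G_e$ of $\mathbb Q$, enumerating $1/p$ for every prime $p$ at the outset (so no height is $0$, $G_e$ is not finitely generated, and hence $G_e\not\cong\mathbb Z$) and then enumerating higher powers $1/p^k$ according to the block strategies below. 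Since $E(G_e)$ is the localization $\mathbb Z_{S_e}$ with $S_e=\{p:p$ has infinite height in $G_e\}$, and since the infinite-height primes of a computable group form a $\Pi^0_2$ set that we may prescribe, and since $G_e\not\cong\mathbb Z$, by Theorem~\ref{thm:rank-one-char} it suffices to choose $S_e$ with $\mathbb Z_{S_e}$ having $1$ in the stable range iff $e\in A$. The main tool, to be established in Section~\ref{sec:sr1}, is the criterion: $\mathbb Z_S$ has $1$ in the stable range iff for every integer $f>1$ all of whose prime factors lie outside $S$, the residues $\{-1\}\cup\{p\bmod f:p\in S\}$ generate $(\mathbb Z/f\mathbb Z)^{\times}$. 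This is obtained by reducing Definition~\ref{def:sr1} to integer pairs $b_1,b_2$ with $\gcd(b_1,b_2)=1$ and the primes of $b_2$ outside $S$, noting that $b_1+b_2h$ can be made a unit of $\mathbb Z_S$ for some $h\in\mathbb Z_S$ exactly when $b_1\bmod b_2$ lies in the subgroup generated by $-1$ and $\{p\bmod b_2:p\in S\}$, and that $b_1$ ranges over all of $(\mathbb Z/b_2\mathbb Z)^{\times}$.

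With the criterion in hand, I attach to each $n$ a prime $q_n>3$ and to each pair $(n,m)$ a witness prime $p_{n,m}$ that is a primitive root modulo $q_n$, and I arrange ``$p_{n,m}\in S_e$ iff $P(e,n,m)$'' by withholding high powers of $p_{n,m}$ until the $\Pi^0_2$ approximations to $P(e,n,m)$ stabilize. Then ``$\exists m\,(p_{n,m}\in S_e)$'' is exactly $Q(e,n)$; if $p_{n,m}\in S_e$ for some $m$, generation holds modulo $q_n$ (a primitive root alone generates $(\mathbb Z/q_n\mathbb Z)^{\times}$), and block $n$'s strategy, when it currently believes $\neg Q(e,n)$, withholds powers $\ge 2$ of every prime that is $\not\equiv\pm1\pmod{q_n}$, so that if $Q(e,n)$ truly fails then $S_e\subseteq\{p:p\equiv\pm1\pmod{q_n}\}$ and generation modulo $q_n$ fails. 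Thus, modulo interference, generation holds at $q_n$ iff $Q(e,n)$, and --- provided no other modulus spuriously fails --- generation holds at every relevant $f$ iff $\forall n\,Q(e,n)$, i.e.\ iff $e\in A$; the case analysis for composite and prime-power moduli $f$ in the criterion is routine since such $f$ are then products of the finitely many non-$S_e$ primes and $S_e$ hits all residue classes modulo them.

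The delicate point, which I expect to be the main obstacle, is exactly this interference: no prime can be $\equiv\pm1$ modulo infinitely many of the $q_n$, so a witness prime activated for block $n'$ may be a primitive root (hence a ``generation witness'') modulo $q_n$ for some other $n$, rescuing a modulus that should witness a failure of $A$. I would resolve this by a priority-style construction in which blocks appoint fresh moduli over the course of the construction and each activated witness prime is chosen --- via the Chinese Remainder Theorem together with Dirichlet's theorem on primes in arithmetic progressions --- to be $\equiv\pm1$ modulo every modulus appointed so far and $\equiv 1$ modulo a fresh prime reserved as a future modulus, so that injuries propagate only downward in priority and the least-priority block witnessing $e\notin A$ still sees generation fail at its (possibly reappointed) modulus, while for $e\in A$ every block eventually ceases withholding and the criterion is met at every $f$. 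Verifying the injury bookkeeping, and that the reservations/residue-class constraints are simultaneously satisfiable at every stage, is where essentially all of the work lies; the upper bound and the reduction of Definition~\ref{def:sr1} to the ``generates the unit group'' statement are comparatively routine. Granting the construction, $e\mapsto\ulcorner G_e\urcorner$ is a computable $m$-reduction of $A$ to $I_C$, so $I_C$ is $\Pi^0_4$ $m$-complete; and since $Q$, the criterion, and the construction all relativize to an arbitrary oracle, so does the theorem.
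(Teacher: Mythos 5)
Your upper bound is fine and matches the paper's (sketched) argument, and your overall strategy for the lower bound --- reduce Definition~\ref{def:sr1} via the Estes--Ohm criterion (Lemma~\ref{lem:localization}) to ``the primes of $M$ generate the unit group modulo every modulus coprime to $M$,'' then control which designated primes have infinite height by $\Pi^0_2$ conditions and use designated moduli $q_n$ to detect failure --- is the right general shape. But there are two genuine gaps in the lower bound. First, in the positive direction ($e\in A$) you must verify generation modulo \emph{every} $f$ all of whose prime factors lie outside $S_e$; since your $S_e$ consists only of the countably many witness primes, its complement is infinite, so your justification that the remaining moduli are ``products of the finitely many non-$S_e$ primes'' is false, and a single witness that is a primitive root modulo $q_n$ gives you no control whatsoever over generation modulo an unrelated prime $\ell$. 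The paper handles this by building a cofinal divisibility chain $a_0 \mid a_1 \mid a_2 \mid \cdots$ with every integer dividing some $a_i$, and by making each block of witnesses $P_{i,j}$ an entire generating set of $(\mathbb{Z}/a_i\mathbb{Z})^\times$ (not a single primitive root, which need not exist for composite moduli); then generation modulo an arbitrary $f$ follows by choosing $i$ with $f \mid a_i$.

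Second, the interference problem you correctly identify is where your proof actually lives, and the priority construction you sketch is not carried out; it is not clear it can be, since reappointing a modulus $q_n$ after injury conflicts with block $n$'s earlier witnesses having been chosen as primitive roots modulo the \emph{old} modulus, and you cannot know at any finite stage which witness $m$ will realize $\exists m\,P(e,n,m)$. The paper avoids priority and injury entirely by two static devices: (a) the membership condition for the primes in $P_{i,j}$ is \emph{cumulative}, $(\forall i'\le i)(\exists j'\le j)(\forall u)(\exists v)R(x;i',j',u,v)$, so that if block $i^*$ fails then no prime from any $P_{i,j}$ with $i\ge i^*$ enters $M$ at all; and (b) every prime in $P_{i,j}$ is arranged (by quadratic reciprocity and Dirichlet) to be a quadratic residue modulo $q_k$ for all $k>i$, so the surviving primes generate only residues modulo $q_{i^*}$ and a non-residue $\alpha_1$ witnesses failure of the Estes--Ohm condition. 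You would need either to supply these (or equivalent) devices or to fully verify the injury bookkeeping; as written, the reduction is not established.
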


\noindent Most algebraic properties have characterizations that are either relatively simple ($\Sigma^0_3$ or $\Pi^0_3$ or simpler) or have no characterization ($\Sigma^1_1$ or $\Pi^1_1$ $m$-hard). Thus the cancellation property for rank 1 groups occupies an interesting intermediate space of properties that admit a characterization, but the characterization is complicated.

Formally, we will show that if $S \subseteq \mathbb{N}$ is a $\Pi^0_4$ set, then there is a computable reduction $f$ from $S$ to $I_{C}$; for some computable function $f$ we have:
\[ \text{for all $n$:}\qquad n \in S \Longleftrightarrow f(n) \in I_{C}.\]
If such a reduction exists, then any decision procedure for $I_{C}$ would immediately yield one for $S$; so $I_{C}$ is at least as hard as $S$. Thus characterizing the cancellable torsion-free rank 1 groups is necessarily as complicated as Theorem \ref{thm:rank-one-char}.

In the second half of this paper, we move on to countable infinite-rank groups. Here, again there is no known characterization, and in fact, we know of no upper bound on the complexity of the index set other than that it must be $\Pi^\alpha_2$ for some $\alpha$. Saying that a group $A$ has the cancellation property involves quantifying over all groups $G$ and $H$ of any size, and we know of no argument that says that we can restrict our attention to countable groups. This is quite an unusual and interesting situation.

We do obtain a lower bound: the cancellation property is $\Pi^1_1$ $m$-hard.

\begin{theorem}\label{thm:main-inf}
	The index set of the class of cancellable torsion-free countable groups is $\Pi^1_1$ $m$-hard. Moreover, this relativizes.
\end{theorem}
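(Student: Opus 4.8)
The plan is to reduce the $\Pi^1_1$ $m$-complete set $\mathrm{WF}$ of indices of well-founded computable trees on $\omega^{<\omega}$ to the index set of cancellable countable torsion-free groups; equivalently, to produce uniformly from a computable tree $T$ a computable torsion-free abelian group $A_T$ such that $A_T$ is cancellable if and only if $T$ is well-founded. (One could just as well reduce from the set of indices of computable well-orderings.)

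To build $A_T$ I would fix a computable bijection $\sigma \mapsto p_\sigma$ between $\omega^{<\omega}$ and the primes and realize $A_T$ as a subgroup of the $\mathbb{Q}$-vector space with basis $\{e_\sigma : \sigma \in T\}$: at each node $\sigma$ we place a finite-rank ``gadget'' (a subgroup of $\mathbb{Q}$ of a prescribed type built from $p_\sigma$ and the primes of neighbouring levels), and we glue the gadgets along the edges of $T$ by divisibility relations among the $e_\sigma$. The gluing is to be rigged so that (i) along any single infinite branch of $T$ the glued gadgets assemble, as a \emph{direct summand} of $A_T$, into a fixed countable torsion-free group $C_0$ that is not cancellable --- taking $C_0$ to be either $\mathbb{Z}^{(\omega)}$ (which fails cancellation since $\mathbb{Z}^{(\omega)} \oplus \mathbb{Z} \cong \mathbb{Z}^{(\omega)} \oplus 0$) or a rank-one group whose endomorphism ring lacks $1$ in the stable range, which is non-cancellable by Theorem~\ref{thm:rank-one-char} --- while (ii) over any well-founded subtree the glued gadgets produce a cancellable group, and this cancellability is not destroyed by the gluing.

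For the forward direction, if $T$ is ill-founded I fix an infinite branch; by (i), $A_T \cong C_0 \oplus A_T'$ for some $A_T'$. Choosing $G \not\cong H$ with $C_0 \oplus G \cong C_0 \oplus H$ gives $A_T \oplus G \cong C_0 \oplus A_T' \oplus G \cong C_0 \oplus A_T' \oplus H \cong A_T \oplus H$ with $G \not\cong H$, so $A_T$ is not cancellable. (In general, non-cancellability is inherited by every group having a non-cancellable direct summand, which is what makes this half easy once (i) is arranged.) For the reverse direction, if $T$ is well-founded I would argue by transfinite induction on the ordinal rank of $T$: given an isomorphism $A_T \oplus G \to A_T \oplus H$, use that the root gadget has stable range $1$ to modify the isomorphism so that it respects the root gadget, exactly as in the proof of Theorem~\ref{thm:rank-one-char}, reducing the problem to the subgroups of $A_T$ coming from the immediate subtrees of $T$; those subgroups are cancellable by the induction hypothesis, so one recurses, and the recursion bottoms out because $T$ is well-founded, yielding an isomorphism $G \to H$.

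The main obstacle is the well-founded direction, and specifically the design of the gluing. Cancellability is not preserved under direct sums --- $\mathbb{Z}$ is cancellable but $\mathbb{Z}^{(\omega)}$ is not --- so it does not suffice to assemble $A_T$ from cancellable pieces; the divisibility relations linking the gadgets must be rigid enough that no $\mathbb{Z}^{(\omega)}$-type (or other) pathology can arise over a well-founded subtree, yet by (i) exactly such a pathology must surface along any infinite branch. Moreover, cancellability quantifies over \emph{all} groups $G, H$ of arbitrary cardinality, so the inductive descent must be run against isomorphisms of arbitrary groups, not merely countable ones; I would handle this by extracting from any putative failure of cancellation a purely combinatorial object --- an infinite branch of $T$, equivalently an infinite sequence of non-unit, non-zero-divisor endomorphisms of the root gadgets mirroring the stable-range obstruction of the rank-one case --- and invoking well-foundedness of $T$ for the contradiction. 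Making a single uniformly computable construction carry all of this at once (non-cancellable along infinite branches, cancellable over every well-founded subtree) is the heart of the argument; the final relativization is routine.
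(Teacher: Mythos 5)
Your choice of reduction (from well-founded trees) and your handling of the ill-founded direction agree with the paper in outline: the construction there glues countably many copies of Riggs's gadget to a common element $z$, so that an infinite path $\pi$ forces a decomposition $H_T \cong A \oplus \bigoplus_{s} B^s_\pi$ with all $B^s_\pi$ isomorphic, whence $H_T$ has a direct summand of the form $B^{(\omega)}$ and is therefore not cancellable --- the same ``a group with a non-cancellable direct summand is non-cancellable'' observation you make. So that half of your plan is sound, modulo actually exhibiting the summand.

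The genuine gap is in the well-founded direction. Your proposed transfinite induction on the rank of $T$ --- modifying an arbitrary isomorphism $A_T \oplus G \to A_T \oplus H$ so that it respects the root gadget and then recursing into immediate subtrees --- is not a proof and runs into exactly the obstacles you name yourself: cancellation must be checked against groups of arbitrary cardinality, it is not preserved under direct sums, and Theorem \ref{thm:rank-one-char} is a statement about rank~$1$ groups only, so it gives no licence to peel off gadgets one at a time; peeling off infinitely many individually cancellable pieces is precisely the mechanism by which $\mathbb{Z}^{(\omega)}$ fails to cancel. The missing ingredient is Evans's theorem (Theorem 2 of \cite{Evans73}): for a torsion-free abelian group $G$ of \emph{any} rank, if $E(G)$ has $1$ in the stable range then $G$ is cancellable. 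This single fact converts the well-founded direction from a quantification over all groups into a countable computation of the endomorphism ring. The paper then arranges that every element of $H_T$ is infinitely divisible by a fixed infinite, co-infinite set $Q$ of primes with $\mathbb{Z}_Q$ having $1$ in the stable range, and proves a rigidity statement: every endomorphism of $H_T$ is multiplication by some $k \in \mathbb{Z}_Q$, because a non-scalar endomorphism must send some $y^s_n$ to a combination involving an $x^r_\sigma$ with $\sigma \in T$ of length $n$, and iterating the divisibility constraints extends $\sigma$ level by level into an infinite path through $T$. That rigidity argument is the real content of the hard direction, and your sketch contains no substitute for it.
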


\noindent This means that there is no simpler definition than one which uses a universal quantifier over countable sets. The proof uses methods from Riggs's \cite{Riggs15} theorem that indecomposability is $\Pi^1_1$ $m$-complete.

We conjecture that: (a) if a group cancels with every countable group, then it cancels with every groups, and so the index set is $\Pi^1_2$; and (b) the class is $\Pi^1_2$ $m$-hard, i.e., that (a) is the best characterization of the cancellation property for infinite-rank groups. 

For groups of finite rank greater than 1, it follows from Theorem \ref{thm:main-fr} that the cancellable groups are $\Pi^0_4$ $m$-hard. We again know of no upper bound.

\section{Background on Number Theory}

We say that $p$ is a quadratic residue modulo $q$ if $p \equiv x^2 \pmod q$ for some $x$. Exactly half of the non-zero elements of $\mathbb{Z} / q \mathbb{Z}$ are quadratic residues modulo $q$, and $1 \equiv 1^2 \pmod q$ is always a quadratic residue. We will make use of the law of quadratic reciprocity, which relates $p$ being a quadratic residue modulo $q$ to $q$ being a quadratic residue modulo $p$.

\begin{theorem}[Quadratic reciprocity]
	Given $p$ and $q$ distinct odd primes:
	\begin{itemize}
		\item If $p \equiv 1 \pmod 4$ or $q \equiv 1 \pmod 4$, then $p$ is a quadratic residue modulo $q$ if and only if $q$ is a quadratic residue modulo $p$.
		\item If $p \equiv q \equiv 3 \pmod 4$, then $p$ is a quadratic residue modulo $q$ if and only if $q$ is not a quadratic residue modulo $p$.
	\end{itemize}
\end{theorem}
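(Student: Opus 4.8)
The plan is to prove quadratic reciprocity by the classical route of Gauss's Lemma followed by Eisenstein's lattice-point count. For an odd prime $p$ and an integer $a$ coprime to $p$, write $\left(\frac{a}{p}\right)$ for the Legendre symbol, $+1$ if $a$ is a quadratic residue modulo $p$ and $-1$ otherwise. The first step is Euler's criterion, $\left(\frac{a}{p}\right) \equiv a^{(p-1)/2} \pmod p$: since $(\Z/p\Z)^\times$ is cyclic of order $p-1$, the nonzero squares form the unique index-two subgroup, $a^{(p-1)/2}$ is a square root of $1$ and hence $\pm 1$, and it equals $1$ exactly when $a$ lies in that subgroup.

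Next I would prove Gauss's Lemma: with $a$ coprime to $p$, list the least positive residues modulo $p$ of $a, 2a, \dots, \tfrac{p-1}{2}a$ and let $\mu$ be how many of them exceed $p/2$; then $\left(\frac{a}{p}\right) = (-1)^\mu$. Split these residues into those $s_i \le \tfrac{p-1}{2}$ and those $t_j > \tfrac{p-1}{2}$; a short argument shows $\{s_i\} \cup \{p - t_j\} = \{1, \dots, \tfrac{p-1}{2}\}$, and multiplying all the residues together and comparing with $a^{(p-1)/2}\bigl(\tfrac{p-1}{2}\bigr)!$ modulo $p$ yields $\left(\frac{a}{p}\right) = (-1)^\mu$ by Euler's criterion. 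Specializing $a = q$ for distinct odd primes $p, q$, I would write $kq = p\lfloor kq/p\rfloor + r_k$, sum over $k = 1, \dots, \tfrac{p-1}{2}$, and reduce modulo $2$ (using that $p$ and $q$ are odd) to obtain $\mu \equiv \sum_{k=1}^{(p-1)/2}\lfloor kq/p\rfloor \pmod 2$, so that $\left(\frac{q}{p}\right) = (-1)^{\sum_k \lfloor kq/p\rfloor}$ and symmetrically $\left(\frac{p}{q}\right) = (-1)^{\sum_j \lfloor jp/q\rfloor}$.

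The remaining step is Eisenstein's count: $\sum_{k=1}^{(p-1)/2}\lfloor kq/p\rfloor + \sum_{j=1}^{(q-1)/2}\lfloor jp/q\rfloor$ is the number of integer points $(x,y)$ with $1 \le x \le \tfrac{p-1}{2}$ and $1 \le y \le \tfrac{q-1}{2}$ lying strictly below, respectively strictly above, the line $py = qx$, and no lattice point in this range lies on the line, since $py = qx$ forces $p \mid x$. Hence the sum equals $\tfrac{p-1}{2}\cdot\tfrac{q-1}{2}$, giving
\[ \left(\frac{p}{q}\right)\left(\frac{q}{p}\right) = (-1)^{\frac{p-1}{2}\cdot\frac{q-1}{2}}. \]
Since $\tfrac{p-1}{2}\cdot\tfrac{q-1}{2}$ is even exactly when at least one of $p, q$ is $\equiv 1 \pmod 4$ and odd exactly when both are $\equiv 3 \pmod 4$, this is precisely the case split in the statement. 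The only real obstacles are the combinatorial bookkeeping in Gauss's Lemma and the parity reduction to the floor sums; the lattice count and the concluding case analysis are routine.
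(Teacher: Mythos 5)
The paper does not prove this statement at all: quadratic reciprocity is quoted in the ``Background on Number Theory'' section as a classical fact, stated without proof and used only as a black box (to choose primes $q \equiv 1 \pmod 4$ with prescribed residue behaviour). So there is no ``paper's proof'' to compare against; what you have written is a self-contained proof of a result the authors take for granted.

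Your argument itself is the standard Gauss--Eisenstein proof and it is correct. Euler's criterion follows as you say from cyclicity of $(\Z/p\Z)^\times$; Gauss's Lemma follows from the observation that $\{s_i\} \cup \{p - t_j\}$ is a permutation of $\{1,\dots,\tfrac{p-1}{2}\}$ (no $s_i$ equals any $p - t_j$, since $s_i + t_j \equiv (k+k')q$ with $0 < k + k' < p$); the parity reduction $\mu \equiv \sum_k \lfloor kq/p\rfloor \pmod 2$ comes from summing the division identities and using that $q - 1$ is even and $p$ is odd; and the lattice-point count is clean because $py = qx$ has no solutions in the given rectangle. The final identity $\left(\frac{p}{q}\right)\left(\frac{q}{p}\right) = (-1)^{\frac{p-1}{2}\cdot\frac{q-1}{2}}$ translates exactly into the two bullet points of the statement. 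This is more than the paper asks of the reader, but there is nothing to object to.
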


\noindent Usually we will apply this in the case where we have a fixed $p$, and we want to choose $q$ such that $p$ is a quadratic residue modulo $q$. By choosing $q \equiv 1 \pmod p$ we ensure that $q$ is a quadratic residue modulo $p$. By also choosing $q \equiv 1 \pmod 4$, by quadratic reciprocity we get that $p$ is a   residue modulo $q$. We can find primes satisfying these congruences by Dirichlet's theorem.

\begin{theorem}[Dirichlet's theorem]
	Given $a$ and $d$ coprime, there are infinitely many primes $p \equiv a \pmod{d}$.
\end{theorem}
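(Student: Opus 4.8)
This is a classical theorem, so the plan is simply to recall the standard analytic proof via Dirichlet characters and $L$-functions and to flag the one genuinely substantive point. Fix $d$ and $a$ with $\gcd(a,d)=1$, write $G=(\mathbb{Z}/d\mathbb{Z})^\times$, and extend each character $\chi$ of $G$ to a completely multiplicative function $\chi\colon\mathbb{Z}\to\mathbb{C}$ that vanishes on integers not coprime to $d$. To each $\chi$ associate the Dirichlet series $L(s,\chi)=\sum_{n\ge 1}\chi(n)n^{-s}$, which for $\Re(s)>1$ converges and equals the Euler product $\prod_p(1-\chi(p)p^{-s})^{-1}$. First I would take logarithms of the Euler product to obtain, for real $s>1$,
\[ \log L(s,\chi)=\sum_p \chi(p)\,p^{-s}+O(1), \]
the $O(1)$ absorbing the uniformly convergent contribution of prime powers $p^k$ with $k\ge 2$. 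By orthogonality of characters, $\tfrac{1}{|G|}\sum_{\chi\in\widehat G}\overline{\chi(a)}\chi(p)$ equals $1$ when the prime $p$ is congruent to $a$ modulo $d$ and $0$ otherwise, so summing the previous display against $\overline{\chi(a)}$ over all $\chi$ gives
\[ \sum_{p\,\equiv\, a\,(d)} p^{-s}=\frac{1}{|G|}\sum_{\chi\in\widehat G}\overline{\chi(a)}\log L(s,\chi)+O(1). \]

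Next I would study the right-hand side as $s\to 1^+$. For the principal character $\chi_0$, $L(s,\chi_0)$ differs from $\zeta(s)$ by finitely many Euler factors, so $\log L(s,\chi_0)\to+\infty$. For a non-principal $\chi$ the partial sums $\sum_{n\le x}\chi(n)$ are bounded, so by partial summation $L(s,\chi)$ extends holomorphically to $\Re(s)>0$ with $L(1,\chi)=\sum_n\chi(n)/n$; hence $\log L(s,\chi)$ stays bounded as $s\to1^+$ as long as $L(1,\chi)\ne 0$. Granting the non-vanishing $L(1,\chi)\ne 0$ for every non-principal $\chi$, the displayed sum tends to $+\infty$ as $s\to1^+$, which is impossible if only finitely many primes were congruent to $a$ modulo $d$. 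This proves the theorem.

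The hard part will be precisely the non-vanishing $L(1,\chi)\ne 0$. For a complex-valued $\chi$ (one with $\chi\ne\overline\chi$) I would argue from the product $\prod_{\chi\in\widehat G}L(s,\chi)$: on the real axis each of its Euler factors is at least $1$, so the product is $\ge 1$ for $s>1$, yet if $L(1,\chi)=0$ then also $L(1,\overline\chi)=0$, producing a zero of order at least $2$ that would outweigh the simple pole coming from $\chi_0$ and force the product to vanish at $s=1$, a contradiction. The genuinely delicate case is a real non-principal (quadratic) $\chi$, where only a simple zero is available and this counting fails; there I would invoke Landau's theorem that a Dirichlet series with non-negative coefficients has a singularity at its real abscissa of convergence, applied to $\zeta(s)L(s,\chi)=\sum_n a_n n^{-s}$, whose coefficients satisfy $a_n\ge 0$ and $a_{m^2}\ge 1$: if $L(1,\chi)=0$ the pole of $\zeta$ would be cancelled, the product would be entire, its defining series would then converge on all of $\mathbb{C}$, but the size of the coefficients rules out convergence at $s=\tfrac12$. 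This contradiction gives $L(1,\chi)\ne 0$ and finishes the proof.
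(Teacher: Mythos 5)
Your outline is a correct sketch of the standard analytic proof; the paper itself states Dirichlet's theorem as classical background in Section 2 and gives no proof, so there is nothing to compare against on the paper's side. The structure you give --- Euler products, logarithms, character orthogonality to isolate $\sum_{p\equiv a\,(d)}p^{-s}$, divergence from the principal character, and the non-vanishing $L(1,\chi)\neq 0$ split into the complex case (via $\prod_\chi L(s,\chi)\ge 1$ and the double zero from $\chi,\overline\chi$) and the real case (via Landau's theorem applied to $\zeta(s)L(s,\chi)$) --- is the canonical argument and is sound. One small overstatement: if $L(1,\chi)=0$ for a real non-principal $\chi$, the product $\zeta(s)L(s,\chi)$ is not ``entire'' on the strength of partial summation alone; it is holomorphic on $\Re(s)>0$, which is all Landau's theorem needs, since the contradiction is obtained at $s=\tfrac12$ where $\sum_m a_{m^2}m^{-1}$ already diverges. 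For the purposes of this paper the theorem is simply quoted, so no proof is required, but your sketch would pass muster if one were.
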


\noindent We will use quadratic residues for the following fact: a non-residue cannot equal to a product of residues. Indeed, if $a \equiv b_1^2 b_2^2 \cdots b_n^2 \pmod q$, then $a$ itself is a quadratic residue.

\section{\texorpdfstring{Subgroups of $\mathbb{Q}$}{Subgroups of Q}}

To begin, we identify the endomorphism ring of a rank 1 torsion-free abelian group with a localization of $\mathbb{Z}$.

A \emph{multiplicative set} of a ring is a subset of the ring that is closed under multiplication. A multiplicative set $S\subseteq R$ is said to be \emph{saturated} if for any $a,b \in R$ with $ab \in S$, we have $a,b \in S$. All the multiplicative sets we consider are saturated. A saturated multiplicative set in $\mathbb{Z}$ is determined by the primes it contains.

\begin{lemma}\label{lem:end-group}
Let $G$ be a rank 1 torsion-free abelian group. Then $E(G) \cong \mathbb{Z}_M = \mathbb{Z}\left[ \frac{1}{m} \mid m\in M \right]$ where $M$ is the multiplicative set generated by the primes $p$ which infinitely divide each element of $G$.
\end{lemma}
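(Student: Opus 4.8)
The plan is to work with a concrete copy of $G$ inside $\mathbb{Q}$ and reduce everything to a computation with $p$-adic valuations. First I would note that any torsion-free rank 1 group embeds into $G\otimes\mathbb{Q}\cong\mathbb{Q}$, and after multiplying by a suitable nonzero rational---an operation that changes neither $E(G)$ nor the set of primes that infinitely divide every element---we may assume $G\subseteq\mathbb{Q}$ with $1\in G$. Since $G$ is torsion-free of rank 1, every endomorphism of $G$ extends uniquely to a $\mathbb{Q}$-linear endomorphism of $\mathbb{Q}$, i.e.\ to multiplication by a rational; this identifies $E(G)$ with the subring $R:=\{q\in\mathbb{Q}\mid qG\subseteq G\}$ of $\mathbb{Q}$, and evaluating at $1$ gives $R\subseteq G$.

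The combinatorial heart is a description of $G$ by its height sequence. For each prime $p$ set $h_p=\sup\{n\ge 0\mid p^{-n}\in G\}\in\{0,1,2,\dots\}\cup\{\infty\}$; since $p^{-n}\in G$ implies $p^{-(n-1)}=p\cdot p^{-n}\in G$, the set $\{n\mid p^{-n}\in G\}$ is an initial segment of $\N$, so $h_p$ is attained when finite. I would then prove that $G=\{q\in\mathbb{Q}\mid v_p(q)\ge -h_p\text{ for every prime }p\}$, where $v_p$ denotes the $p$-adic valuation. The inclusion $\subseteq$ is easy: given $q=a/b\in G$ in lowest terms, multiplying $q$ by the integer $b/p^{v_p(b)}$ and then by an integer inverse of $a$ modulo $p^{v_p(b)}$ lands on $p^{-v_p(b)}\in G$, whence $v_p(b)\le h_p$. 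For $\supseteq$ one writes a rational $q$ with $v_{p_i}(q)=-e_i\ge -h_{p_i}$ (and $v_p(q)\ge 0$ otherwise) as a $\Z$-linear combination of the elements $p_i^{-e_i}\in G$ via a Bézout/partial-fractions identity $\tfrac{1}{p_1^{e_1}\cdots p_k^{e_k}}=\tfrac{w}{p_1^{e_1}}+\tfrac{u}{p_2^{e_2}\cdots p_k^{e_k}}$ and induction on $k$, then multiplies by the integer numerator.

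With this in hand the main computation is short. If $q=a/b\in R$ in lowest terms, then $q^k\in R\subseteq G$ for every $k\ge 1$ (here the ring structure of $R=E(G)$ is used: $q^k$ is a composition of endomorphisms), so $k\,v_p(b)\le h_p$ for all $k$, forcing $v_p(b)=0$ whenever $h_p<\infty$; hence $R\subseteq\Z[1/p\mid h_p=\infty]$. Conversely, if $b$ involves only primes with $h_p=\infty$, then for any $x=c/d\in G$ the denominator of $qx$ has $v_p$ at most $v_p(b)+v_p(d)$, which is $\le h_p$ for every $p$ (trivially when $h_p=\infty$, and because $v_p(b)=0$ when $h_p<\infty$), so $qx\in G$ by the height characterization; thus $R=\Z[1/p\mid h_p=\infty]$. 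Finally, $h_p=\infty$ says exactly that $p$ infinitely divides $1\in G$, and if $p$ infinitely divides $1$ then it infinitely divides every $g=c/d\in G$, since $gp^{-n}=c/(dp^n)$ still has $p$-denominator at most $v_p(d)+n<\infty=h_p$ and unchanged denominators at the other primes; so the primes with $h_p=\infty$ are precisely those infinitely dividing every element of $G$. Therefore $E(G)\cong R=\Z[1/p\mid h_p=\infty]=\mathbb{Z}_M$.

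The step I expect to be the main obstacle is the inclusion $\supseteq$ of the height characterization: getting the Bézout/partial-fractions bookkeeping right and handling the $h_p=\infty$ cases uniformly. Everything else---the reduction to $G\subseteq\mathbb{Q}$, the identification of $E(G)$ with $R$, and the valuation computation---is formal.
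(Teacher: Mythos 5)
Your proof is correct and follows essentially the same route as the paper's (much briefer) argument: identify $E(G)$ with the subring $\{q\in\mathbb{Q}\mid qG\subseteq G\}\subseteq G$ of $\mathbb{Q}$, use closure under powers to force the denominators into the primes infinitely dividing $1$, and check conversely that multiplication by $1/p$ is an endomorphism for such $p$. The only difference is that you supply in full the height-sequence description of subgroups of $\mathbb{Q}$ and the valuation bookkeeping, which the paper treats as ``easy to see.''
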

\begin{proof}
Each endomorphism $f$ of $G$ is determined by $f(1)$, and so it is easy to see that $E(G)$ can be identified with a sub-$\mathbb{Z}$-algebra of $\mathbb{Q}$ which is also a subgroup of $G$. Thus
\[ E(G) \subseteq \mathbb{Z}\left[\frac{1}{p^\infty} \;\Bigl\vert\; \text{$p^n \mid 1$ in $G$ for all $n$}\right].\]
On the other hand, if $p^\infty \mid 1$ in $G$, then the multiplication-by-$\frac{1}{p}$ map is an endomorphism of $\mathbb{Q}$ which restricts to an endomorphism of $G$.
\end{proof}

We will make use of the following lemma which simplifies the conditions for a localization of $\mathbb{Z}$ to have 1 in the stable range.

\begin{lemma}[Estes and Ohm, Lemma 7.3 of \cite{EstesOhm67}]\label{lem:localization}
Let $M$ be a saturated multiplicative set of $\mathbb{Z}$. Then the following are equivalent:
\begin{itemize}
	\item 1 is in the stable range of $\mathbb{Z}_M$.
	\item If $\alpha_1$ and $\alpha_2$ are coprime to each other and relatively prime to each element of $M$, then there is $m \in M$ and $b \in \mathbb{Z}$ such that $\alpha_1 m + \alpha_2 b \in M$.
\end{itemize}
\end{lemma}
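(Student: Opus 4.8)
The plan is to use only the concrete description $\mathbb{Z}_M = \mathbb{Z}\!\left[\tfrac1p \mid p\in P\right]$, where $P$ is the set of primes contained in $M$, together with two elementary observations. First, a nonempty saturated multiplicative set $M\subseteq\mathbb{Z}$ automatically contains $1$ (factor any $a\in M$ as $a\cdot 1$ and apply saturation), hence contains $-1$ (from $(-1)(-1)=1$), so $M$ is closed under sign change. Second, the units of $\mathbb{Z}_M$ are exactly the rationals whose lowest-terms numerator and denominator are both products of primes from $P$; equivalently, they are the ratios $m_1/m_2$ with $m_1,m_2\in M$, and in particular an \emph{integer} is a unit of $\mathbb{Z}_M$ if and only if it lies in $M$. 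These facts convert the stable-range condition, which a priori quantifies over $\mathbb{Z}_M$, into a statement about integers.

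For the direction $(1)\Rightarrow(2)$: given $\alpha_1,\alpha_2$ coprime in $\mathbb{Z}$ and coprime to every element of $M$, choose $c_1,c_2\in\mathbb{Z}$ with $\alpha_1 c_1+\alpha_2 c_2=1$ and read this as an equation in $\mathbb{Z}_M$. Applying the hypothesis that $1$ is in the stable range (with $f_i=\alpha_i$, $g_i=c_i$) produces $h=b/m\in\mathbb{Z}_M$, $b\in\mathbb{Z}$, $m\in M$, such that $\alpha_1+\alpha_2(b/m)=(\alpha_1 m+\alpha_2 b)/m$ is a unit of $\mathbb{Z}_M$. Multiplying by the unit $m$, the \emph{integer} $\alpha_1 m+\alpha_2 b$ is a unit of $\mathbb{Z}_M$ and hence lies in $M$ — which is precisely the conclusion of (2).

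For $(2)\Rightarrow(1)$: suppose $f_1 g_1+f_2 g_2=1$ in $\mathbb{Z}_M$. If $f_1$ is a unit take $h=0$; if $f_2$ is a unit take $h=f_2^{-1}(1-f_1)$; so assume $f_1,f_2$ are nonzero non-units. Write $f_i=u_i\beta_i$ with $u_i$ a unit of $\mathbb{Z}_M$ and $\beta_i\in\mathbb{Z}$ coprime to every prime of $P$ (take the lowest-terms numerator of $f_i$ and absorb its $P$-prime factors into $u_i$). Clearing denominators in the relation $f_1 g_1+f_2 g_2=1$ yields integers $x_1,x_2$ with $\beta_1 x_1+\beta_2 x_2\in M$; any common prime factor of $\beta_1$ and $\beta_2$ would then divide an element of $M$ while lying outside $P$, so $\gcd(\beta_1,\beta_2)=1$. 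Thus $\beta_1,\beta_2$ meet the hypotheses of (2), which supplies $m\in M$ and $b\in\mathbb{Z}$ with $\beta_1 m+\beta_2 b\in M$, i.e.\ $\beta_1+\beta_2(b/m)$ is a unit of $\mathbb{Z}_M$. Taking $h=u_1 u_2^{-1}(b/m)\in\mathbb{Z}_M$ gives $f_1+f_2 h=u_1\bigl(\beta_1+\beta_2(b/m)\bigr)$, a product of units, as required.

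The argument is short, and the only step needing genuine care is the normalization in $(2)\Rightarrow(1)$: one must split each $f_i$ into a unit times an integer coprime to $M$, verify that the coprimality of $f_1$ and $f_2$ forces these integer parts to be coprime integers, and then choose $h$ of the exact shape $u_1 u_2^{-1}(b/m)$ so that the problem collapses onto condition (2). Everything else is routine bookkeeping with the units of $\mathbb{Z}_M$.
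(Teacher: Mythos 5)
Your proof is correct. Note that the paper itself gives no proof of this statement --- it is quoted verbatim from Estes and Ohm (Lemma 7.3 of their paper), so there is no in-paper argument to compare against; your write-up is a legitimate self-contained replacement. The engine of both directions is the right one: saturation gives $\pm1\in M$ and forces every element of $M$ to be (up to sign) a product of primes of $P$, whence an integer is a unit of $\mathbb{Z}_M$ exactly when it lies in $M$, and this converts ``$f_1+f_2h$ is a unit'' into ``$\alpha_1 m+\alpha_2 b\in M$.'' The two delicate points in $(2)\Rightarrow(1)$ --- splitting each $f_i$ as a unit times an integer $\beta_i$ prime to $P$, and deducing $\gcd(\beta_1,\beta_2)=1$ by clearing denominators and applying saturation to $\beta_1x_1+\beta_2x_2\in M$ --- are both handled correctly, as are the degenerate cases where some $f_i$ is zero or a unit. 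The only unstated hypothesis is that $0\notin M$ (otherwise $\mathbb{Z}_M$ is the zero ring), which holds throughout the paper since $M$ is always generated by a set of primes.
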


We are now ready to prove Theorem \ref{thm:main-fr}.

{
\renewcommand{\thetheorem}{\ref{thm:main-fr}}
\begin{theorem}
The index set of the class cancellable torsion-free rank 1 groups is $\Pi^0_4$ $m$-complete.
\end{theorem}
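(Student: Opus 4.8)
The statement has two halves: an upper bound ($I_C \in \Pi^0_4$) and a lower bound ($I_C$ is $\Pi^0_4$ $m$-hard). I'll plan both.

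For the upper bound: By Theorem~\ref{thm:rank-one-char}, a rank-1 torsion-free group $G$ is cancellable iff $G \cong \mathbb{Z}$ or $E(G)$ has $1$ in the stable range. The condition $G \cong \mathbb{Z}$ is arithmetically simple (it's $\Sigma^0_3$ or so — saying some element is not divisible by any prime). By Lemma~\ref{lem:end-group}, $E(G) \cong \mathbb{Z}_M$ where $M$ is generated by the primes infinitely dividing elements of $G$. By Lemma~\ref{lem:localization}, $1$ is in the stable range of $\mathbb{Z}_M$ iff for all $\alpha_1, \alpha_2$ coprime to each other and to every element of $M$, there exist $m \in M$ and $b \in \mathbb{Z}$ with $\alpha_1 m + \alpha_2 b \in M$. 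Now count quantifiers: "$p$ infinitely divides every element of $G$" is $\Pi^0_2$ (for all elements, for all $n$, $p^n$ divides it — with the divisibility witnessed existentially, so really $\Pi^0_3$; I'd need to be careful, but membership in $M$ should be around $\Pi^0_3$, and relative primality to all of $M$ is then $\Pi^0_3$ as well). The outer "for all $\alpha_1,\alpha_2$ ... there exist $m,b$ ..." wraps a $\Sigma^0_3$-ish matrix inside a $\forall\exists$, landing at $\Pi^0_4$. The disjunction with the $G\cong\mathbb{Z}$ clause stays within $\Pi^0_4$. I'd present this as a careful quantifier count using the two lemmas.

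**The lower bound — the main work.** Here I need, given a $\Pi^0_4$ set $S$, a computable $f$ with $n \in S \iff f(n) \in I_C$. A $\Pi^0_4$ predicate has the form $\forall i\, \exists j\, R(n,i,j)$ where $R$ is $\Pi^0_2$; equivalently $\forall i$ [something $\Sigma^0_3$]. The strategy is to build, uniformly in $n$, a computable subgroup $G_n \subseteq \mathbb{Q}$ — equivalently, choose a computable set of primes $P_n$ to be the "denominators allowed" (the primes infinitely dividing $1$), so that $E(G_n) = \mathbb{Z}_{M_n}$ with $M_n$ generated by $P_n$ — such that $\mathbb{Z}_{M_n}$ has $1$ in the stable range iff $n \in S$. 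We want to exploit the Estes–Ohm criterion (Lemma~\ref{lem:localization}): failure of stable range means there exist coprime $\alpha_1,\alpha_2$, each prime to all of $M_n$, such that $\alpha_1 m + \alpha_2 b \notin M_n$ for all $m \in M_n$, $b \in \mathbb{Z}$. The key number-theoretic gadget from Section~2 is that a quadratic non-residue is never a product of quadratic residues: so if I choose the primes in $P_n$ to all be quadratic residues modulo some fixed prime $\alpha_2 = q$, and $\alpha_1$ to be a non-residue mod $q$, then any $m \in M_n$ is a QR mod $q$, and I can try to force $\alpha_1 m + \alpha_2 b$ to always be a non-QR mod $q$ (hence $\notin M_n$) — unless I deliberately "inject" into $P_n$ some prime realizing the needed residue. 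This turns the combinatorial requirement "stable range holds" into an infinitary $\forall\exists$ condition over which residues get realized, matching $\Pi^0_4$.

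**Assembling the reduction and the expected obstacle.** Concretely, I'd set up a system of moduli $q_i$ (one per outer $\forall i$ quantifier of the $\Pi^0_4$ predicate), use Dirichlet's theorem and quadratic reciprocity to find primes with prescribed residues mod each $q_i$, and arrange that for each $i$, the requirement associated to $q_i$ is met (a prime realizing the right residue is eventually put into $P_n$) iff the inner $\exists j\, R(n,i,j)$ holds — enumerating such a prime exactly when we see a witness $j$. Then $\mathbb{Z}_{M_n}$ has $1$ in the stable range iff all requirements are met iff $\forall i\,\exists j\, R(n,i,j)$ iff $n \in S$. I also need the group $G_n \not\cong \mathbb{Z}$ always (easy: always put at least one prime in $P_n$), so the "$G \cong \mathbb{Z}$" escape clause in Theorem~\ref{thm:rank-one-char} never interferes. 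The main obstacle — and where I'd expect to spend the real effort — is the number theory bookkeeping: managing many moduli simultaneously so that realizing a residue for one requirement does not accidentally satisfy (or sabotage) another, verifying via quadratic reciprocity and CRT that all the needed primes exist, and proving the converse direction cleanly, i.e. that if even one requirement is unmet then the Estes–Ohm criterion genuinely fails with a concrete $\alpha_1, \alpha_2$. I'd also need to double check the relativization claim, which should be automatic since the whole construction is uniform and the arithmetic facts relativize trivially.
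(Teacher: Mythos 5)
Your upper-bound sketch and your choice of tools for the lower bound (the Estes--Ohm criterion, the quadratic-residue obstruction, Dirichlet plus reciprocity to manufacture primes with prescribed residues) all match the paper. But there is a genuine gap in how you encode the four quantifiers, and it is not just bookkeeping. You propose to choose a \emph{computable} set of primes $P_n$ to invert, and to enumerate a prime realizing the needed residue ``exactly when we see a witness $j$'' for the inner $\exists j\,R(n,i,j)$ with $R$ in $\Pi^0_2$. Two problems. First, witnessing $\exists j\,[\Pi^0_2]$ is a $\Sigma^0_3$ event, not something you ever ``see'' at a finite stage, so the enumeration rule is not well defined. Second, and more fundamentally, if the set of inverted primes is computable (or even c.e.), then by Lemma~\ref{lem:localization} the statement ``$1$ is in the stable range of $\mathbb{Z}_{M_n}$'' is only $\Pi^0_2$, so this template cannot be $\Pi^0_4$-hard; this is exactly the point the paper makes in Section~\ref{sec:sr1} about strongly computable subrings of $\mathbb{Q}$.

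The missing idea is that the complexity must be pushed into the predicate ``$p$ infinitely divides $1$ in $G$,'' which for a c.e.\ subgroup of $\mathbb{Q}$ is genuinely $\Pi^0_2$: the paper enumerates $\frac{1}{p^m}$ into $G(x)$ only for those $m$ such that a truncated version $(\forall u\le m)(\exists v)R(\dots)$ of the innermost $\Pi^0_2$ clause still holds, so that $p$ lands in the multiplicative set $M$ iff the full condition holds, and the divisibility ``stalls'' at a finite level otherwise. The remaining two quantifiers are absorbed by reserving a separate finite set $P_{i,j}$ of primes for each pair $(i,j)$, each chosen to generate all of $(\mathbb{Z}/a_i\mathbb{Z})^{\times}$ for a sequence $(a_i)$ cofinal under divisibility; then $x\in S$ iff for every $i$ some whole $P_{i,j}$ enters $M$, and this is what makes the positive direction of Estes--Ohm work for an arbitrary modulus $\alpha_2$ --- a point your sketch never addresses, since realizing one residue per requirement is not enough to invert every $\alpha_1$ modulo every $\alpha_2$. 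Your quadratic-residue idea for the negative direction is exactly the paper's, but without the stalling mechanism and the $(i,j)$-indexed generating sets the reduction only reaches $\Pi^0_2$.
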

\addtocounter{theorem}{-1}
}

It will not be hard to see that the proof relativizes.

\begin{proof}
Let $S$ be a $\Pi^0_4$ set. We must construct, for each $x$, a torsion-free rank 1 abelian group $G(x)$ such that
$G(x)$ is cancellable if and only if $x \in S$.

Fix an enumeration $(r_i)_{i > 0}$ of the primes, such that each prime shows up infinitely often. We will make use of sequences of primes with special properties from the following lemma.

\begin{lemma}
There are computable sequences of natural numbers $(a_i)_{i \geq 0}$, primes $(q_i)_{i > 0}$, and disjoint finite sets of primes $(P_{i,j})_{i,j \geq 0}$ such that:
\begin{enumerate}
	\item For each $i$ and $j$, $P_{i,j}$ generates the multiplicative group $(\mathbb{Z}/a_i\mathbb{Z})^{\times}$.
	\item Every $p\in P_{i,j}$ is a quadratic residue in $\mathbb{Z} / q_k \mathbb{Z}$ for $k>i$.
	\item $a_0 = 3$ and $a_{i+1} = a_i q_{i+1} r_{i+1}$. Thus, for every $m\in\mathbb{Z}$, $m \mid a_i$ for some $i$.
	\item $q_i$ is coprime to $a_j$ for $j<i$.
	\item $q_k \notin P_{i,j}$ for any $k,i,j$.
\end{enumerate}
\end{lemma}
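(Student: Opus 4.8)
The plan is to define all three sequences by a single computable stage-by-stage recursion, keeping a running finite set $\mathcal U$ of ``used'' primes --- all $q_k$ chosen so far together with all primes lying in some already-defined $P_{i,j}$ --- and always picking new primes outside $\mathcal U$; this takes care of disjointness of the $P_{i,j}$ and of conditions~(4)--(5) almost automatically. The only genuine interaction is condition~(2), which couples the choice of $q_k$ to the primes occurring in the $P_{i,j}$ with $i<k$. I would resolve this with two complementary rules: \emph{(a)} when a new $q_k$ is chosen, it is chosen so that every $P$-prime already committed is a quadratic residue modulo $q_k$; and \emph{(b)} when a new $P_{i,j}$ is chosen, each of its primes is chosen to be a quadratic residue modulo every $q_l$ with $l>i$ chosen so far. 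Since at any finite stage only finitely many objects exist, both rules are finitary, and together --- provided every $q_k$ and every $P_{i,j}$ is eventually defined --- they yield condition~(2) for all pairs.

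Concretely, at stage $n$ I would first choose $q_n$ and set $a_n=a_{n-1}q_nr_n$, and then define $P_{i,j}$ for the $n$-th pair $(i,j)$ in a fixed enumeration of $\mathbb N^2$ whose $n$-th term always has first coordinate $\le n$ (so $a_i$ is already defined). To choose $q_n$: let $p_1,\dots,p_s$ be the (odd) primes currently in $\mathcal U$, and search, using Dirichlet's theorem, for a prime $q_n\equiv 1\pmod{8p_1\cdots p_s}$ with $q_n\notin\mathcal U$, $q_n\nmid a_{n-1}$, and $q_n\ne r_n$; only finitely many primes of this progression need be skipped. Then $q_n\equiv 1\pmod 4$ together with $q_n\equiv 1\pmod{p_t}$ forces, by quadratic reciprocity, each $p_t$ to be a quadratic residue modulo $q_n$; coprimality to $a_{n-1}$ (hence to all $a_j$ with $j<n$, since $a_j\mid a_{n-1}$) gives~(4); and freshness gives~(5) for the $q$'s created so far, while (5) is preserved for later $q$'s because later $P$-sets avoid $\mathcal U$. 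To choose $P_{i,j}$: let $q_{i+1},\dots,q_K$ be the $q$'s of index $>i$ defined so far, which are coprime to $a_i$ by~(4), so that for each unit $c\in(\mathbb Z/a_i\mathbb Z)^\times$ the Chinese Remainder Theorem lets me search for an odd prime $p_c$ with $p_c\equiv c\pmod{a_i}$ and $p_c\equiv 1\pmod{q_l}$ for $i<l\le K$, taken outside $\mathcal U$ and distinct from the earlier $p_{c'}$. Put $P_{i,j}=\{p_c : c\in(\mathbb Z/a_i\mathbb Z)^\times\}$ (or, more economically, one $p_c$ for each $c$ in a fixed generating set). Since $p_c$ reduces to $c$ modulo $a_i$ and $\gcd(c,a_i)=1$, the set $P_{i,j}$ consists of units and generates $(\mathbb Z/a_i\mathbb Z)^\times$, which is~(1); since $p_c\equiv 1\pmod{q_l}$ and $1$ is always a quadratic residue, rule~(b) holds at this stage, and rule~(a) on future $q$'s extends it to all $k>i$, giving~(2).

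Condition~(3) is immediate from $a_0=3$, $a_{i+1}=a_iq_{i+1}r_{i+1}$ and from the fact that $(r_i)$ lists every prime infinitely often: for fixed $m$, each prime power dividing $m$ divides $a_i$ once $i$ is large enough, so $m\mid a_i$. Everything is effective --- searching an arithmetic progression for a suitable prime terminates by Dirichlet, and deciding a quadratic-residue condition, a coprimality condition, or whether a finite set of residues generates a finite abelian group is decidable --- and the interleaving guarantees every $q_k$, $a_k$, and $P_{i,j}$ is eventually defined, so the sequences are computable. The step I expect to require the most care is laying out the order of construction so that the apparent circularity among~(1),~(2),~(4),~(5) is genuinely acyclic: in particular, verifying that the simultaneous congruences imposed when choosing $q_n$ (moduli $8,p_1,\dots,p_s$) and when choosing $p_c$ (moduli $a_i,q_{i+1},\dots,q_K$) really do have pairwise coprime moduli, which reduces to all $P$-primes being distinct odd primes different from $3$ and from every $q$ (maintained through $\mathcal U$) together with~(4). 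Once that bookkeeping is in place, each condition is checked at the moment the relevant object is created, except~(2), which is discharged in two halves: once when the $P$-set is created and again each time a later $q$ is created.
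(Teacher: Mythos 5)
Your proposal is correct and follows essentially the same route as the paper: a stage-by-stage recursion in which each $q_k$ is chosen by Dirichlet in the progression $1$ modulo $4$ times the product of all previously committed $P$-primes (so quadratic reciprocity makes them residues mod $q_k$), and each $P_{i,j}$ is chosen by CRT plus Dirichlet to hit prescribed units mod $a_i$ while being $\equiv 1$ modulo the already-defined later $q$'s. The only differences are bookkeeping (your explicit used-set $\mathcal U$ and per-stage pairing of $(i,j)$ versus the paper's diagonal $i+j=s$ scheduling, and your separate coprimality check on $a_{n-1}$ versus the paper folding $\prod_{i<s}a_i$ into the modulus), none of which affects the argument.
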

\begin{proof}
We define $a_i$, $q_i$, and $P_{i,j}$ recursively. Begin with $a_0 = 3$. At every stage $s$, we define $q_s$ (and hence $a_s$) as well as $P_{i,j}$ for $i+j = s$. When we define $q_s$, we will check that (2) holds for $q_s$ and each $p \in P_{i,j}$ already defined, i.e., those $i + j < s$. When we define $P_{i,j}$ at stage $s = i+j$, we will check that $(2)$ holds between each $p \in P_{i,j}$ and each $q_{s'}$, $i < s' \leq s$. In this way, we verify that (2) holds in all instances. We will verify (5) in the same way.

To find $q_s$, we use Dirichlet's theorem to choose a prime satisfying the equation
\[ q_s \equiv 1 \pmod{{4 \prod_{\substack{p \in P_{i,j}\\i + j < s}}{p} \prod_{i < s}{a_i}}}.\]
Define $a_s = a_{s-1} q_s r_s$. Conditions (3), (4), and (5) are immediate. Note that $q_s$ is a quadratic residue modulo each $p \in P_{i,j}$, $i + j < s$. Since $q_s \equiv 1 \pmod 4$, by quadratic reciprocity, each $p \in P_{i,j}$, $i + j < s$, is a quadratic residue modulo $q_s$. So (2) is maintained.

For each $i + j = s$, let $g_1,\ldots,g_\ell$ be a finite set of generators of $(\mathbb{Z} / a_i \mathbb{Z})^\times$. Then $P_{i,j}$ will consist of new primes $p_1,\ldots,p_\ell$, where $p_k$ is a solution to the following system of congruence equations:
\begin{align*}
p_k &\equiv 1 \pmod{q_{i+1}\cdots q_s} \\
p_k &\equiv g_{k} \pmod{a_i}.\end{align*}
Since $q_{i+1},\ldots,q_s$ are coprime to $a_i$, by the Chinese remainder theorem, this is equivalent to a single congruence equation. By Dirichlet's theorem, there is a prime solution which is larger than $q_1,\ldots,q_i$. (1) and (5) are clear. Since $p_k \equiv 1 \pmod{q_j}$, for $k < j \leq s$, and $1$ is a quadratic residue, (2) is maintained.
\end{proof}

We are now ready to construct $G(x)$. Let $R$ be a computable relation such that
\[ x\in S \Longleftrightarrow \forall y \exists z \forall u \exists v R(x;y,z,u,v).\]
We define $G(x)$ to be the following c.e.\ subgroup of $\mathbb{Q}$:
\[ G(x) = \mathbb{Z}\left[\frac1{p^{m}} \ \Bigl\vert\ p\in P_{i,j}, (\forall i'\le i) (\exists j'\le j)(\forall u\le m)(\exists v) R(x;i',j',u,v) \right] \]
We now have to argue that $G(x)$ is cancellable if and only if $x \in S$.

\begin{remark}\label{rem:z}
	We may assume that for each $x$, $\forall u \exists v R(x;0,0,u,v)$. Thus $G(x)$ is never isomorphic to $\mathbb{Z}$.
\end{remark}

Recall, from Lemma \ref{lem:end-group}, that $E(G(x)) \cong \mathbb{Z}_M$ where $M$ is the multiplicative set generated by the primes which infinitely divide each element of $G$. To begin, we can characterize the set $M$.

\begin{claim}
A prime $p$ is in $M$ if and only if $p \in P_{i,j}$ for some $i$ and $j$ and \[(\forall i'\le i) (\exists j'\le j)(\forall u)(\exists v) R(x;i',j',u,v).\]
\end{claim}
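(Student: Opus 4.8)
The plan is to directly analyze which primes infinitely divide every element of $G(x)$, using the explicit generating data for $G(x)$. Recall $G(x) = \mathbb{Z}[\frac{1}{p^m} \mid p \in P_{i,j},\ \varphi(x;i,j,m)]$ where $\varphi(x;i,j,m)$ abbreviates $(\forall i' \le i)(\exists j' \le j)(\forall u \le m)(\exists v) R(x;i',j',u,v)$. Since the $P_{i,j}$ are pairwise disjoint sets of primes, and a prime $p$ can only divide elements of $G(x)$ through the generators $\frac{1}{p^m}$ that were actually thrown in, the key observation is: a prime $p$ infinitely divides $1$ in $G(x)$ (equivalently, infinitely divides every element, by rank $1$ torsion-freeness) if and only if $p \in P_{i,j}$ for some $i,j$ and the generator $\frac{1}{p^m}$ was included for arbitrarily large $m$. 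So first I would establish this bookkeeping reduction carefully — that $p^\infty \mid 1$ in $G(x)$ iff $p \in P_{i,j}$ for some fixed $i,j$ with $\varphi(x;i,j,m)$ holding for all $m$.

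Next I would unravel what "$\varphi(x;i,j,m)$ for all $m$" means for a fixed pair $(i,j)$. Pulling the "$\forall m$" inside, $(\forall m)(\forall u \le m)(\exists v) R(x;i',j',u,v)$ is just $(\forall u)(\exists v) R(x;i',j',u,v)$; and the $(\forall i' \le i)(\exists j' \le j)$ prefix is unaffected by $m$. Hence $(\forall m)\varphi(x;i,j,m)$ is equivalent to $(\forall i' \le i)(\exists j' \le j)(\forall u)(\exists v) R(x;i',j',u,v)$, which is exactly the condition in the claim. So the direction "$p \in M$ with that condition holding $\Rightarrow$ $p \in P_{i,j}$ and the displayed formula holds" and its converse both follow once the bookkeeping step is in place. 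The final detail is to note that $M$, being the multiplicative set generated by the primes infinitely dividing each element, contains exactly these primes (and their products), so membership of a *prime* $p$ in $M$ is equivalent to $p$ infinitely dividing each element — this is immediate from saturation and the fact that $M$ is generated by primes.

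I expect the only real subtlety — not so much an obstacle as a point requiring care — to be the bookkeeping step: verifying that no prime $p$ can "accidentally" become infinitely divisible in $G(x)$ via generators other than the $\frac{1}{p^m}$'s. This uses disjointness of the $P_{i,j}$ (clause (1) of the Lemma guarantees they are disjoint) together with the standard fact that for a subgroup of $\mathbb{Q}$ generated over $\mathbb{Z}$ by a set of fractions $\frac{1}{n}$, a prime $p$ divides $1$ to height exactly $\sup\{k : \frac{1}{p^k} \text{ lies in the group}\}$, and this supremum is determined solely by the $p$-parts of the generators. Since each generator $\frac{1}{p^m}$ is a pure prime power, the generators contributing to the $p$-height are precisely those with base $p$, i.e. those indexed by $p \in P_{i,j}$. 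Once this is spelled out, the claim follows. I would also remark in passing (or defer) that this $M$ is what feeds into Lemma~\ref{lem:end-group} to identify $E(G(x)) \cong \mathbb{Z}_M$, and that the subsequent argument will apply Lemma~\ref{lem:localization} to this $M$.
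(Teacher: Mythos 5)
Your bookkeeping reduction --- that a prime $p$ lies in $M$ iff $p\in P_{i,j}$ for some $i,j$ and the generator $\frac{1}{p^m}$ is present for every $m$ --- is fine, and is in fact more carefully spelled out than in the paper, which dispatches it in one sentence. The problem is the next step, which is where the actual content of the claim lives. You assert that $(\forall m)\,\varphi(x;i,j,m)$ is equivalent to $(\forall i'\le i)(\exists j'\le j)(\forall u)(\exists v)R(x;i',j',u,v)$ because ``the $(\forall i'\le i)(\exists j'\le j)$ prefix is unaffected by $m$.'' But the $\forall m$ stands \emph{outside} the quantifier $\exists j'\le j$, so pulling it inside requires interchanging a universal with an existential quantifier: you must pass from ``for every $m$ there is some $j'\le j$ (possibly depending on $m$) such that $(\forall u\le m)(\exists v)R$'' to ``there is a single $j'\le j$ such that $(\forall u)(\exists v)R$.'' That interchange is not a free logical manipulation --- $\forall\exists$ does not imply $\exists\forall$ in general --- and it is exactly the point the paper's proof is devoted to.

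The interchange does hold here, but for two specific reasons you need to invoke: (a) $j'$ ranges over the \emph{finite} set $\{0,\dots,j\}$, so by pigeonhole some single $j'$ witnesses the condition for arbitrarily large $m$; and (b) the matrix $(\forall u\le m)(\exists v)R(x;i',j',u,v)$ is monotone (downward closed) in $m$, so a $j'$ that works for arbitrarily large $m$ works for all $u$. This pigeonhole-plus-monotonicity argument is precisely the paper's proof of the harder direction, and without it your write-up has a gap at its central step. Everything else (the saturation remark about $M$, the disjointness of the $P_{i,j}$ preventing accidental divisibility) is correct, though note that disjointness is stated in the lemma's preamble rather than in its clause (1).
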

\begin{proof}
Fix $p \in P_{i,j}$. Then $p \in M$ if and only if for every $m$, $\frac{1}{p^m} \in G$. If 
\[(\forall i'\le i) (\exists j'\le j)(\forall u)(\exists v) R(x;i',j',u,v) \]
then it is easy to see from the definition of $G(x)$ that $\frac{1}{p^m} \in G(x)$ for every $m$. On the other hand, if $\frac{1}{p^m} \in G$ for every $m$, then for every $m$,
\[ (\forall i'\le i) (\exists j'\le j)(\forall u\le m)(\exists v) R(x;i',j',u,v). \]
Fix $i' \leq i$. For each $m$, there is $j' \leq j$ such that $(\forall u\le m)(\exists v) R(x;i',j',u,v)$. There are only finitely many such $j'$, so there is some $j' \leq j$ such that, for arbitrarily large $m$, it is true that $(\forall u\le m)(\exists v) R(x;i',j',u,v)$. Thus, for this $j'$, we have $(\forall u)(\exists v) R(x;i',j',u,v)$. So we have shown that
\[(\forall i'\le i) (\exists j'\le j)(\forall u)(\exists v) R(x;i',j',u,v) \]
as desired.
\end{proof}

\begin{claim}\label{cl:2}
If $x \in S$, then $G(x)$ is cancellable.
\end{claim}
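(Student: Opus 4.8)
## Proof Proposal for Claim \ref{cl:2}

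The plan is to use Lemma \ref{lem:localization} to verify that $1$ is in the stable range of $\mathbb{Z}_M = E(G(x))$, which by Theorem \ref{thm:rank-one-char} (using Remark \ref{rem:z} to rule out the $G(x) \cong \mathbb{Z}$ case) gives cancellability. So suppose $x \in S$, i.e. $(\forall y)(\exists z)(\forall u)(\exists v) R(x;y,z,u,v)$. Fix $\alpha_1, \alpha_2 \in \mathbb{Z}$ which are coprime to each other and relatively prime to each element of $M$; I must produce $m \in M$ and $b \in \mathbb{Z}$ with $\alpha_1 m + \alpha_2 b \in M$. The strategy is to locate a single prime $p \in M$ sitting inside some $P_{i,j}$ and to exploit the group-generation property (1): since $P_{i,j}$ generates $(\mathbb{Z}/a_i\mathbb{Z})^\times$, I can hope to write the unit $\alpha_1^{-1}\alpha_2 \pmod{a_i}$ (or its negative, or something like it) as a product of elements of $P_{i,j}$, all of which lie in $M$, hence realize $\alpha_1 m + \alpha_2 b \equiv 0$ modulo a suitable $m \in M$.

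More concretely, first I would use the characterization of $M$ from the previous Claim: because $x \in S$, for the pair $(i',j') = (0,0)$ and indeed for every $i'$ there is a witnessing $j'$, so $M$ is "large" — in particular, for every index $i$ there exists $j$ (depending on $i$, and we can take $j$ as large as we like by monotonicity in $j$) such that every $p \in P_{i,j}$ lies in $M$. Thus each group $(\mathbb{Z}/a_i\mathbb{Z})^\times$ is generated by primes that are actually in $M$. Now pick $i$ large enough that $\alpha_1, \alpha_2$ are both units modulo $a_i$ (possible since $\alpha_1,\alpha_2$ are relatively prime to every element of $M$, and the $a_i$ are built from the $q$'s and $r$'s; one must check $\alpha_1,\alpha_2$ are coprime to $a_i$ — this needs the $r_i$ to be handled, or one restricts to the part of $a_i$ coprime to $\alpha_1\alpha_2$, or one simply notes any prime factor of $a_i$ dividing $\alpha_\ell$ must be avoided, which may force choosing $i$ carefully). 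Then $-\alpha_2/\alpha_1$ is a well-defined element of $(\mathbb{Z}/a_i\mathbb{Z})^\times$, so by (1) with an appropriate $j$ it equals $p_1 p_2 \cdots p_t \pmod{a_i}$ for some $p_1,\dots,p_t \in P_{i,j} \subseteq M$. Set $m = p_1 \cdots p_t \in M$. Then $\alpha_1 m + \alpha_2 \equiv \alpha_1(-\alpha_2/\alpha_1) + \alpha_2 \equiv 0 \pmod{a_i}$, so $a_i \mid \alpha_1 m + \alpha_2$. Since $a_i \in M$ (as $a_i$ is, up to the generation, divisible by things in $M$ — actually one should use that $a_i$ itself need not be in $M$, so instead take $b=1$ and observe $\alpha_1 m + \alpha_2 b$ is divisible by $a_i$; one then needs $\alpha_1 m + \alpha_2 b \in M$, which requires its prime factors to be in $M$).

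The main obstacle I anticipate is precisely this last point: having arranged $a_i \mid \alpha_1 m + \alpha_2 b$, I still need $\alpha_1 m + \alpha_2 b$ itself to lie in the multiplicative set $M$, not merely to be divisible by $a_i$ or by $m$. This is where the quadratic-residue machinery of the preliminary Lemma must enter: conditions (2) and the quadratic-reciprocity setup are there to guarantee that a suitable choice forces $\alpha_1 m + \alpha_2 b$ to be (a unit times) a product of primes already in $M$, or to control its prime factorization modulo the $q_k$'s so that no "bad" prime can divide it. I would need to choose $m$ and $b$ (perhaps iterating the generation argument, or adding a correction term divisible by a high power of $a_i$) so that $\alpha_1 m + \alpha_2 b$ equals, say, a single prime in some $P_{i',j'} \subseteq M$, or a product of such — using Dirichlet's theorem inside the argument to find that prime in the right congruence class, and quadratic reciprocity (via property (2)) to ensure it is forced into $M$ rather than excluded. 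Pinning down exactly which primes $\alpha_1 m + \alpha_2 b$ can have, and showing they are all in $M$, is the technical heart of the claim; the group-theoretic generation step (1) is the easy part, and the rest is bookkeeping with congruences.
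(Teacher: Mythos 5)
There is a genuine gap, and it is exactly the one you flag at the end: your construction arranges $a_i \mid \alpha_1 m + \alpha_2 b$ and then stalls on showing $\alpha_1 m + \alpha_2 b \in M$. In fact that version cannot be repaired: any integer divisible by $a_i$ is divisible by the primes $q_1,\dots,q_i$, which by property (5) lie in no $P_{i',j'}$ and hence are never in $M$; since $M$ is saturated, such an integer is never in $M$. Your guess that the quadratic-residue machinery rescues this is also off target --- properties (2) and (5) are used only in the converse direction (Claim \ref{cl:3}), to exhibit an unsolvable instance when $x \notin S$. They play no role here.

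The missing idea is to aim for $\alpha_1 m + \alpha_2 b = 1$ rather than for divisibility by $a_i$: since $1 \in M$, nothing further needs to be checked. Concretely, given $\alpha_1,\alpha_2$ coprime to each other and to every element of $M$, use property (3) to choose $i$ with $\alpha_2 \mid a_i$. As you correctly argue, $x \in S$ gives, for each $i' \le i$, a witness $j_{i'}$ with $(\forall u)(\exists v) R(x;i',j_{i'},u,v)$; taking $j = \max_{i' \le i} j_{i'}$ yields $P_{i,j} \subseteq M$. By property (1) the primes of $P_{i,j}$ generate $(\mathbb{Z}/a_i\mathbb{Z})^{\times}$, which surjects onto $(\mathbb{Z}/\alpha_2\mathbb{Z})^{\times}$, so there is a product $m$ of primes of $P_{i,j}$ (hence $m \in M$) with $m \equiv \alpha_1^{-1} \pmod{\alpha_2}$. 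Then $\alpha_2 \mid 1 - \alpha_1 m$, so $b := (1-\alpha_1 m)/\alpha_2 \in \mathbb{Z}$ gives $\alpha_1 m + \alpha_2 b = 1 \in M$, and Lemma \ref{lem:localization} together with Theorem \ref{thm:rank-one-char} (and Remark \ref{rem:z}, which you did invoke correctly) finishes the proof. Everything else in your setup --- the reduction to Lemma \ref{lem:localization}, the observation that each $(\mathbb{Z}/a_i\mathbb{Z})^{\times}$ is generated by primes of $M$ --- matches the paper.
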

\begin{proof}
If $x\in S$, consider an equation $\alpha_1 u \equiv 1 \pmod{\alpha_2}$, with $\alpha_1$ and $\alpha_2$ coprime to each other and to each element of $M$, for which we want to find a solution $u \in M$. Find $a_i$ such that $\alpha_2 \mid a_i$. As $x\in S$, for each $i' \le i$, there is a $j_{i'}$ such that $\forall u \exists v R(x;i',j_{i'},u,v)$. Let $j$ be the largest among the $j_{i'}$. Then $(\forall i' \le i) (\exists j'\le j)(\forall u) (\exists v) R(x;i',j',u,v)$. So for every $p\in P_{i,j}$, $p \in M$. But by (1), the primes in $P_{i,j}$ generate the multiplicative group modulo $a_i$, and so there is $u$ a product of primes in $P_{i,j}$ (and hence $u \in M$) such that $u \equiv \alpha_1^{-1} \pmod{\alpha_2}$. Then $\alpha_1 u \equiv 1 \pmod{\alpha_2}$.

By Lemma \ref{lem:localization}, $E(G(x))$ has 1 in the stable range, and by Theorem \ref{thm:rank-one-char} $G(x)$ is cancellable.
\end{proof}

\begin{claim}\label{cl:3}
If $x \notin S$, then $G(x)$ is not cancellable.
\end{claim}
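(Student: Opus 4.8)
The plan is to prove the contrapositive directly: assuming $x \notin S$, I will exhibit integers $\alpha_1,\alpha_2$ witnessing that $E(G(x)) \cong \mathbb{Z}_M$ fails the Estes--Ohm criterion of Lemma~\ref{lem:localization}, so that $1$ is not in the stable range of $E(G(x))$; since $G(x)\not\cong\mathbb{Z}$ by Remark~\ref{rem:z}, Theorem~\ref{thm:rank-one-char} then gives that $G(x)$ is not cancellable. First I fix a witness $i_0$ to $x\notin S$, i.e.\ a number $i_0$ such that $\neg(\forall u\exists v\,R(x;i_0,j',u,v))$ holds for \emph{every} $j'$; by Remark~\ref{rem:z} we must have $i_0\geq 1$. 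By the first Claim (the characterization of $M$), every prime generator of $M$ then lies in some $P_{i,j}$ with $i<i_0$: if $i\geq i_0$, the clause ``$(\exists j'\leq j)(\forall u)(\exists v)\,R(x;i_0,j',u,v)$'' demanded of $P_{i,j}$ fails outright.

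The core step is the choice of $\alpha_1,\alpha_2$. I would set $\alpha_2=q_{i_0}$. Since every relevant $i$ satisfies $i<i_0$, property~(2) of the construction's lemma (applied with $k=i_0>i$) shows that every prime generator of $M$ is a quadratic residue modulo $q_{i_0}$; as a product of quadratic residues is a quadratic residue, every element of $M$ is a quadratic residue modulo $q_{i_0}$. Moreover $q_{i_0}$ is coprime to every element of $M$ by property~(5), since $q_{i_0}$ is none of the primes occurring in any $P_{i,j}$. Hence it suffices to find a prime $\alpha_1$ that is a quadratic \emph{non}-residue modulo $q_{i_0}$ and is distinct from every prime generator of $M$: such an $\alpha_1$ is automatically coprime to $q_{i_0}$ and to every element of $M$, and then for any $m\in M$ and $b\in\mathbb{Z}$ we have $\alpha_1 m+q_{i_0}b\equiv\alpha_1 m\pmod{q_{i_0}}$, which is a non-residue and therefore cannot lie in $M$ — exactly the failure demanded by Lemma~\ref{lem:localization}.

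Producing this $\alpha_1$ is the step that needs care, and I expect it to be the main obstacle, because $M$ may have infinitely many prime generators and one cannot dodge all of them by a single congruence. The key observation is that for each fixed $i$, every prime in $P_{i,j}$ with $j\geq 1$ is $\equiv 1\pmod{q_{i+1}}$, because $q_{i+1}$ divides the modulus $q_{i+1}\cdots q_{i+j}$ in the defining congruence of $P_{i,j}$. Hence $\bigcup_{i<i_0}\bigcup_j P_{i,j}$ is contained in the union of the finitely many primes $\bigcup_{i<i_0}P_{i,0}$ with $\bigcup_{i'=1}^{i_0}\{p : p\equiv 1\pmod{q_{i'}}\}$. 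The moduli $q_1,\dots,q_{i_0}$ are pairwise coprime (each $q_s$ was chosen $\equiv 1$ modulo $a_t$, hence modulo $q_t$, for all $t<s$), so by the Chinese remainder theorem and Dirichlet's theorem I can pick a prime $\alpha_1$ which is a fixed quadratic non-residue modulo $q_{i_0}$, satisfies $\alpha_1\equiv 2\pmod{q_{i'}}$ for every $i'<i_0$, and exceeds every prime in $\bigcup_{i<i_0}P_{i,0}$. Such $\alpha_1$ is $\not\equiv 1$ modulo every $q_{i'}$ with $i'\le i_0$ (the case $i'=i_0$ being automatic, since $1$ is a residue and $\alpha_1$ is a non-residue) and is larger than all of $\bigcup_{i<i_0}P_{i,0}$, so it is not a prime generator of $M$, while it is a quadratic non-residue modulo $q_{i_0}$ as required. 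Assembling these with $\alpha_2=q_{i_0}$ and invoking Lemma~\ref{lem:localization} and Theorem~\ref{thm:rank-one-char} finishes the proof; everything outside the construction of $\alpha_1$ is routine bookkeeping with the five properties of the construction's lemma.
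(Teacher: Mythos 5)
Your proof is correct and follows the same strategy as the paper: fix a witness $i_0$, use property (2) to see that every element of $M$ is a quadratic residue modulo $q_{i_0}$, take $\alpha_2 = q_{i_0}$ and a non-residue prime $\alpha_1$, and conclude via Lemma~\ref{lem:localization} and Theorem~\ref{thm:rank-one-char}. The only divergence is the step you flag as the ``main obstacle'': your CRT/Dirichlet construction of $\alpha_1$ (forcing $\alpha_1 \equiv 2 \pmod{q_{i'}}$ for $i' < i_0$ and exceeding the primes in $\bigcup_{i<i_0} P_{i,0}$) is correct but unnecessary, because the non-residue condition alone already dodges every prime generator of $M$: each such prime is a quadratic residue modulo $q_{i_0}$, while $\alpha_1$ is not, so $\alpha_1$ is distinct from all of them and hence coprime to every element of $M$. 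This single congruence condition is exactly the paper's one-line justification (``thus $\alpha_1$ is coprime to each element of $M$'') and is the intended payoff of building property (2) into the construction; your remark that one cannot dodge infinitely many primes with a single congruence is precisely what the quadratic-residue setup is designed to circumvent.
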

\begin{proof}
If $x\notin S$, then $\exists i' \forall j' \exists u \forall v \neg R(x;i',j',u,v)$. Fix $i^*$ such that $\forall j' \exists u \forall v \neg R(x;i^*,j',u,v)$. Then for every $i \geq i^*$ and every $j$, $(\forall i'\le i) (\exists j'\le j)(\forall u)(\exists v) R(x;i',j',u,v)$ is not true. So none of the $p \in P_{i,j}$ for $i \geq i^*$ is in $M$. So a prime $p$ can only be in $M$ if $p \in P_{i,j}$ for $i < i^*$. Then by (2) $p$ is a quadratic residue modulo $q_{i^*}$. So in fact every element of $M$ is a quadratic residue modulo $q_{i^*}$. Note also that by (5), each element of $M$ is coprime to $q_{i^*}$.

Pick a prime $\alpha_1 \in \mathbb{Z}$ which is not a quadratic residue in $\mathbb{Z} / q_{i^*} \mathbb{Z}$; thus $\alpha_1$ is coprime to each element of $M$. Let $\alpha_2 = q_{i^*}$. Then $\alpha_1 u_1 \equiv u_2 \pmod{\alpha_2}$ has no solution $u_1,u_2 \in M$ as $u_1,u_2$ would have to be quadratic residues while $\alpha_1$ is not.

By Lemma \ref{lem:localization}, $E(G(x))$ does not have 1 in the stable range, and by Theorem \ref{thm:rank-one-char} $G(x)$ is not cancellable. Note when we apply this last theorem that $G(x)$ is not isomorphic to $\mathbb{Z}$ (see Remark \ref{rem:z}).
\end{proof}

The previous two claims complete the proof.
\end{proof}

\section{The Property of Having 1 in the Stable Range}\label{sec:sr1}

It is not hard to see, using examples (i) and (ii) from the introduction, that the property of having 1 in the stable range is $\Pi^0_2$ $m$-complete. One proves this by reducing $(\text{Fin},\text{Cof})$ to the index set of the computable rings with $1$ in the stable range, and then using the fact that $(\Sigma^0_2,\Pi^0_2) \leq_1 (\text{Fin},\text{Cof})$.

However, this is somewhat misleading. Indeed, given a subgroup $G$ of $\mathbb{Q}$, $E(G)$ is a $\Pi^0_2$ subring of $\mathbb{Q}$. Asking whether $E(G)$ inverts finitely many primes is $\Sigma^0_4$; however, asking whether $E(G)$ inverts cofinitely many primes is also $\Sigma^0_4$. Thus examples (i) and (ii) cannot be used to show that the cancellation property is $\Pi^0_4$ $m$-complete. 

The issue is that a computable ring isomorphic to a subring of $\mathbb{Q}$ is essentially a c.e.\ subring of $\mathbb{Q}$. A \textit{strongly computable subring of $\mathbb{Q}$} is a ring $R$ which is a computable subring of $\mathbb{Q}$. For a strongly computable ring $R$, the properties of inverting finitely primes and of inverting cofinitely many primes are both $\Sigma^0_2$. However, we get:

\begin{theorem}
	The set of strongly computable subrings of $\mathbb{Q}$ with 1 in the stable range is $\Pi^0_2$ $m$-complete within the strongly computable subrings of $\mathbb{Q}$.
\end{theorem}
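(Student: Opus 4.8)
The plan is to obtain the $\Pi^0_2$ upper bound directly from the Estes--Ohm criterion (Lemma~\ref{lem:localization}), and to obtain $\Pi^0_2$-hardness by rerunning the construction in the proof of Theorem~\ref{thm:main-fr} with a relation $R$ that ignores its last two arguments. Dropping those two quantifiers collapses the driving condition from $\Pi^0_4$ to $\Pi^0_2$ and, crucially, turns the set of inverted primes into a \emph{computable} set rather than merely a c.e.\ one.

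\textbf{Upper bound.} I would index the strongly computable subrings of $\mathbb{Q}$ by computable characteristic functions of sets of primes, the ring attached to a prime set $S$ being $\mathbb{Z}_S$; every strongly computable subring of $\mathbb{Q}$ arises this way. Let $M$ denote the saturated multiplicative set of $\mathbb{Z}$ generated by $S$. The key observation is that for a strongly computable $R=\mathbb{Z}_S$, membership in $M$ is decidable uniformly in the index, since $m\in M \iff 1/m\in R$. By Lemma~\ref{lem:localization}, $R$ has $1$ in the stable range iff for all $\alpha_1,\alpha_2\in\mathbb{Z}$ coprime to each other and to every element of $M$ there are $m\in M$ and $b\in\mathbb{Z}$ with $\alpha_1 m+\alpha_2 b\in M$. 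Now the hypothesis on $\alpha_1,\alpha_2$ is decidable (test the finitely many prime divisors of $\alpha_1\alpha_2$ for membership in $S$), and the matrix $(\exists m\in M)(\exists b\in\mathbb{Z})\,(\alpha_1 m+\alpha_2 b\in M)$ is $\Sigma^0_1$, so the whole statement is $\Pi^0_2$, uniformly in the index.

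\textbf{Lower bound.} Fix a $\Pi^0_2$-complete set $A$, say $x\in A \iff (\forall i')(\exists j')\,R'(x;i',j')$ with $R'$ computable, and after a routine padding assume $R'(x;0,j')$ holds for all $x,j'$. I would then run the construction of Theorem~\ref{thm:main-fr} verbatim, but taking $R(x;i',j',u,v):=R'(x;i',j')$, keeping the same sequences $(a_i)$, $(q_i)$, $(P_{i,j})$ from the auxiliary lemma. Because $R$ no longer depends on $u,v$, the condition $(\forall i'\le i)(\exists j'\le j)(\forall u\le m)(\exists v)\,R(x;i',j',u,v)$ degenerates to $(\forall i'\le i)(\exists j'\le j)\,R'(x;i',j')$, and $G(x)=\mathbb{Z}_{S_x}$ where
\[ S_x=\bigl\{\,p\in P_{i,j}\ :\ (\forall i'\le i)(\exists j'\le j)\,R'(x;i',j')\,\bigr\}. \]
To decide $p\in S_x$ one locates the unique (by disjointness) pair $(i,j)$ with $p\in P_{i,j}$ and then evaluates a bounded quantifier over the computable $R'$; the search for $(i,j)$ terminates provided the auxiliary lemma is arranged so that the primes in $P_{i,j}$ grow with $i+j$, which its proof essentially already ensures. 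Hence $S_x$ is a computable set of primes uniformly in $x$, so $x\mapsto\mathbb{Z}_{S_x}$ is a computable map landing inside the strongly computable subrings of $\mathbb{Q}$, and it remains to check that $\mathbb{Z}_{S_x}$ has $1$ in the stable range iff $x\in A$. This is Claims~\ref{cl:2} and~\ref{cl:3} with $R$ replaced by $R'$, and it uses only Lemma~\ref{lem:localization} (no appeal to Theorem~\ref{thm:rank-one-char} is needed): if $x\in A$, then given $\alpha_1,\alpha_2$ as in Lemma~\ref{lem:localization} one finds $a_i$ with $\alpha_2\mid a_i$, chooses $j$ large enough that $(\forall i'\le i)(\exists j'\le j)\,R'(x;i',j')$, so $P_{i,j}\subseteq S_x$ and $P_{i,j}$ generates $(\mathbb{Z}/a_i\mathbb{Z})^{\times}$, whence a product of its members realizes $\alpha_1^{-1}\pmod{\alpha_2}$; and if $x\notin A$, one fixes $i^*\ge 1$ with $\neg R'(x;i^*,j')$ for all $j'$, observes $S_x\cap P_{i,j}=\emptyset$ for $i\ge i^*$, so by properties (2) and (5) of the auxiliary lemma every element of $M$ is a nonzero quadratic residue mod $q_{i^*}$, and then a prime $\alpha_1$ that is a non-residue mod $q_{i^*}$ together with $\alpha_2=q_{i^*}$ witnesses the failure of the Estes--Ohm condition.

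\textbf{The main obstacle.} The only genuinely new point, and the one demanding care, is the decidability of $M$ for strongly computable rings used in the upper bound: this is exactly what separates the strongly computable case ($\Pi^0_2$) from the general c.e.\ case of Theorem~\ref{thm:main-fr} ($\Pi^0_4$), where ``$E(G)$ inverts finitely many primes'' and ``$E(G)$ inverts cofinitely many primes'' are both $\Sigma^0_4$. The secondary bookkeeping task — checking that $\bigcup_{i,j}P_{i,j}$ is computable and that the pair $(i,j)$ is recoverable effectively from a prime $p\in P_{i,j}$ — is handled once and for all by building the auxiliary lemma with strictly increasing prime sets, so it presents no real difficulty.
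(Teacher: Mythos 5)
Your proposal matches the paper's proof: the paper's sketch performs exactly your reduction, defining $M_x=\{p\mid p\in P_{i,j},\ (\forall i'\le i)(\exists j'\le j)\,S(x;i',j')\}$ from the $\Pi^0_2$ matrix and reusing the arguments of Claims~\ref{cl:2} and~\ref{cl:3} (stopping at Lemma~\ref{lem:localization}, since no passage through Theorem~\ref{thm:rank-one-char} is needed for rings). Your explicit treatment of the $\Pi^0_2$ upper bound via decidability of $M$ and of the computability of $\bigcup_{i,j}P_{i,j}$ only fills in bookkeeping the paper leaves implicit.
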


Thus the complexity of checking whether 1 is in the stable range has everything to do with the condition in Definition \ref{def:sr1}, rather than with searching for inverses of elements.

Note that the complexity of the set of computable rings is $\Pi^0_2$, so we prove a completeness result within the strongly computable subrings of $\mathbb{Q}$, i.e., we ask that the $m$-reduction always produces a strongly computable subring of $\mathbb{Q}$.

\begin{proof}[Proof Sketch]
Given a $\Pi^0_2$ set $U$, write it as $\{x \mid (\forall i)(\exists j) S(x;i,j)\}$ where $S$ is computable.
We define $R(x) = \mathbb{Z}_{M_x}$ to be the strongly computable subring of $\mathbb{Q}$ which inverts the following computable set of primes:
\[ M_x = \{ p \mid p\in P_{i,j}, (\forall i'\le i) (\exists j'\le j) S(x;i',j') \} \]
Note that each $P_{i,j}$ is computable uniformly (in fact, the construction of the $P_{i,j}$ in the previous section obtains a strong index for $P_{i,j}$ as a finite set), and we can compute whether $P_{i,j} \subseteq M_x$. Thus $R(x)$ is strongly computable.

We can then use the arguments of Claims \ref{cl:2} and \ref{cl:3} to show that $x$ is in the $\Pi^0_2$ set $U$ if and only if $R(x)$ has 1 in the stable range.
\end{proof}

\section{Infinite case}

We now move on to groups which are not finitely generated. As mentioned in the introduction, we actually have no upper bound on the complexity of the index set. The best result we have towards an upper bound---but which still does not obtain an upper bound---is a simple Shoenfield absoluteness argument. We will use the following lemma:
\begin{lemma}[Essentially Barwise \cite{Barwise73}, see Lemma 2.9 of \cite{KnightSchweberMontalban16}]\label{lem:bf}
Let $G$ and $H$ be groups (or any other type of structure). The following are equivalent:
\begin{enumerate}
	\item $G$ and $H$ are back-and-forth equivalent,
	\item in every generic extension where $G$ and $H$ are countable, they are isomorphic,
	\item in some generic extension where $G$ and $H$ are countable, they are isomorphic.
\end{enumerate}
\end{lemma}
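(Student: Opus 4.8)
The plan is to reduce the statement to two standard facts about back-and-forth equivalence (that is, the existence of a nonempty back-and-forth system between $G$ and $H$, equivalently $G\equiv_{\infty\omega}H$): first, that on \emph{countable} structures back-and-forth equivalence coincides with isomorphism; and second, that back-and-forth equivalence is absolute between $V$ and its set-forcing extensions. Once these are in hand, the three conditions are cyclically equivalent with essentially no further work.

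First I would dispatch the countable case. If $G\cong H$ via $f$, then $\{f\res F : F\subseteq G\text{ finite}\}$ is a back-and-forth system, so isomorphic structures are always back-and-forth equivalent (no cardinality hypothesis needed). For the converse I would run the classical zig-zag argument: given countable enumerations $G=\{g_n\}$ and $H=\{h_n\}$ and a nonempty back-and-forth system $I$, build an increasing chain $p_0\subseteq p_1\subseteq\cdots$ of members of $I$ with $g_n\in\dom p_{2n+1}$ and $h_n\in\ran p_{2n+2}$, alternately invoking the ``forth'' and ``back'' extension properties of $I$; then $\bigcup_n p_n$ is an isomorphism $G\to H$.

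Next comes the absoluteness, which is the real content. I would express ``$G$ and $H$ are back-and-forth equivalent'' as ``the largest back-and-forth system $J$ between $G$ and $H$ is nonempty''. Here $J$ is the greatest fixed point of the monotone operator $\Phi$ on subsets of the set of finite partial isomorphisms from $G$ to $H$, where $\Phi(X)$ consists of those $p$ such that every $a\in G$ lies in the domain of some extension of $p$ inside $X$ and every $b\in H$ lies in the range of some such extension. By Knaster--Tarski, $J=\bigcap_\alpha\Phi^\alpha(T)$, the intersection of a decreasing transfinite iteration starting from the top element $T$ (the set of all finite partial isomorphisms), and this iteration stabilizes at some ordinal. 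Each step of the iteration only quantifies over the fixed sets $G$ and $H$ and over earlier stages, so the whole recursion is absolute; since forcing adds no new ordinals and leaves the elements of $G$ and $H$ untouched, any forcing extension $V[g]$ carries out the identical iteration and produces the identical $J$. Hence $V$ and $V[g]$ agree on whether $G$ and $H$ are back-and-forth equivalent. (One could equally route this through the stratified back-and-forth relations $\le_\alpha$ of Ash--Knight, which are likewise given by an absolute transfinite recursion.)

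With these two facts I would close the loop as follows. For (1)$\Rightarrow$(2): back-and-forth equivalence in $V$ is preserved to (and reflected from) every extension $V[g]$ by absoluteness, so in any such $V[g]$ in which $G$ and $H$ are additionally countable, the countable case gives $G\cong H$. For (2)$\Rightarrow$(3): collapsing a cardinal $\kappa\geq|G|,|H|$ to $\omega$ produces a generic extension in which both structures are countable, so (3) is a special case of (2). For (3)$\Rightarrow$(1): in the witnessing extension $V[g]$ we have $G\cong H$ with both countable, so by the countable case $G$ and $H$ are back-and-forth equivalent in $V[g]$, and by absoluteness they are back-and-forth equivalent in $V$. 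The one delicate point---the main obstacle---is the absoluteness step: although ``there exists a back-and-forth system'' superficially looks like an irreducibly second-order assertion about the possibly-uncountable structures $G$ and $H$, so that a naive appeal to Shoenfield absoluteness is unavailable, the thing to get right is that this assertion is in fact decided by an absolute transfinite recursion with set parameters, and so is absolute for free.
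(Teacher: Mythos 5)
The paper does not prove this lemma at all --- it is quoted as a known result of Barwise (see also Lemma 2.9 of Knight--Montalb\'an--Schweber), so there is no in-paper argument to compare against. Your proposal is a correct reconstruction of the standard proof: the reduction to (a) the zig-zag argument for countable structures and (b) the absoluteness of back-and-forth equivalence is exactly how this is done in the cited sources, and you correctly identify the only delicate point, namely that ``there exists a back-and-forth system'' cannot be handed to Shoenfield directly for uncountable $G,H$ but is nonetheless absolute because the greatest fixed point $J=\bigcap_\alpha \Phi^\alpha(T)$ is computed by a transfinite recursion whose data ($G$, $H$, the ordinals) are unchanged by set forcing. Two tiny points worth making explicit if you write this up: for (2)$\Rightarrow$(3) you should say which forcing you use (the L\'evy collapse of some $\kappa\geq |G|,|H|$ to $\omega$), which you do; and since the paper's definition of back-and-forth equivalence demands $\varnothing\sim\varnothing$, you should observe that ``$J\neq\varnothing$'' and ``the empty map lies in $J$'' coincide, because adjoining the empty map to any nonempty back-and-forth system yields another one. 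Neither affects correctness.
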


\noindent $G$ and $H$ are \emph{back-and-forth equivalent} if there is a relation $\sim$ on finite tuples from $\mc{A}$ and $\mc{B}$ such that $\varnothing \sim \varnothing$ and for every $\bar{a} \sim \bar{b}$:
\begin{itemize}
	\item $\bar{a}$ and $\bar{b}$ have the same atomic type,
	\item for all $a'$, there is $b'$ such that $\bar{a}a' \sim \bar{b}b'$,
	\item for all $b'$, there is $a'$ such that $\bar{a}a' \sim \bar{b}b'$,
\end{itemize}
It is well-known that two countable structures which are back-and-forth equivalent are actually isomorphic.

We prove:

\begin{proposition}
	Let $A$ be a countable torsion-free abelian group. Then the following are equivalent:
	\begin{enumerate}
		\item $A$ cancels with countable groups: Whenever $G$ and $H$ are countable, and $A \oplus G \cong A \oplus H$, $G \cong H$.
		\item Whenever $G$ and $H$ are any groups and $A \oplus G \cong A \oplus H$, $G$ and $H$ are back-and-forth equivalent.
	\end{enumerate}
\end{proposition}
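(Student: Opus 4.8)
The plan is to prove the two implications separately. The implication $(2) \Rightarrow (1)$ is immediate: if $G$ and $H$ are countable and $A \oplus G \cong A \oplus H$, then by $(2)$ they are back-and-forth equivalent, and since two countable back-and-forth equivalent structures are isomorphic (as noted above), $G \cong H$.

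The substantive direction is $(1) \Rightarrow (2)$, and I would argue it by a forcing/absoluteness argument built on Lemma \ref{lem:bf}. Fix arbitrary groups $G$ and $H$ and an isomorphism $f : A \oplus G \to A \oplus H$. By the equivalence of clauses $(1)$ and $(3)$ of Lemma \ref{lem:bf}, it is enough to produce \emph{some} generic extension in which $G$ and $H$ are countable and isomorphic. Take $V[\mathbb{G}]$ obtained by Levy-collapsing a cardinal $\geq |G| + |H|$ to $\omega$. In $V[\mathbb{G}]$ the groups $G$, $H$, and (already) $A$ are countable; the structures $A \oplus G$ and $A \oplus H$ are unchanged; and $f$, being a fixed set in $V \subseteq V[\mathbb{G}]$, still witnesses $A \oplus G \cong A \oplus H$ there. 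So if statement $(1)$ continues to hold in $V[\mathbb{G}]$, then applying it to the now-countable $G$ and $H$ gives $G \cong H$ in $V[\mathbb{G}]$, which is exactly what Lemma \ref{lem:bf} needs.

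Thus the heart of the matter is transferring $(1)$ from $V$ to $V[\mathbb{G}]$. Since $A$ is countable, fix a real $a \in V$ coding $A$; as ``torsion-free abelian group'' is arithmetic, $a$ still codes the (unchanged) structure $A$ in $V[\mathbb{G}]$. The negation of $(1)$ asserts the existence of countable groups $G_0$ and $H_0$ admitting an isomorphism $A \oplus G_0 \to A \oplus H_0$ but no isomorphism $G_0 \to H_0$. Coding countable groups and maps between them by reals, ``there is an isomorphism $A \oplus G_0 \to A \oplus H_0$'' is $\Sigma^1_1$ in $a$ and codes for $G_0, H_0$, while ``there is no isomorphism $G_0 \to H_0$'' is $\Pi^1_1$; prefixing the real quantifiers $\exists G_0 \exists H_0$, we get that $\neg(1)$ is $\Sigma^1_2$ with parameter $a$, hence $(1)$ is $\Pi^1_2$ in the parameter $a$. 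As $a$ lies in both $V$ and $V[\mathbb{G}]$, Shoenfield absoluteness yields that $(1)$ holds in $V[\mathbb{G}]$, finishing the proof.

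I expect the only genuinely delicate step to be the complexity bookkeeping in the last paragraph: one must verify that ``$A$ cancels with countable groups'' quantifies solely over objects codeable by reals---so that it really sits at level $\Pi^1_2$, where Shoenfield applies---and that $A$, being countable, legitimately serves as a real parameter. Everything else (persistence of $f$, of countability after the collapse, and of torsion-freeness) is routine absoluteness.
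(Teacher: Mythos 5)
Your proof is correct and follows essentially the same route as the paper: the easy direction from back-and-forth equivalence of countable structures, and for $(1)\Rightarrow(2)$ a collapse forcing plus Shoenfield absoluteness of the $\Pi^1_2$ statement ``$A$ cancels with countable groups,'' concluding via Lemma \ref{lem:bf}. The extra detail you supply (choice of the Levy collapse, the complexity computation showing $\neg(1)$ is $\Sigma^1_2$ in a real coding $A$) is exactly what the paper leaves implicit.
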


\begin{proof}
	(2) implies (1) follows easily from the fact that two countable groups which are back-and-forth equivalent are isomorphic. (1) implies (2) uses a forcing argument.
	
	Suppose that $A \oplus G \cong A \oplus H$. Let $V[G]$ be a generic extension in which $G$ and $H$ are countable. Note that (1) is a $\Pi^1_2$ property with parameter $A$, and so by Shoenfield absoluteness, (1) is also satisfied by $A$ in $V[G]$. Thus $G$ and $H$ are isomorphic in $V[G]$. By Lemma \ref{lem:bf} $G$ and $H$ are back-and-forth equivalent in $V$.
\end{proof}

(1) of the proposition is $\Pi^1_2$. Two countable groups which are back-and-forth equivalent are isomorphic, but the same is not true for uncountable groups. Two uncountable groups which are back-and-forth equivalent have the same collection of countable subgroups.

\bigskip{}

We now move to the problem of finding a lower bound. Here we are inspired by combining two results. First, note that if $G$ is any non-trivial group, then $G_\omega = G \oplus G \oplus G \oplus \cdots$ is not cancellable. Indeed, $G_\omega \oplus G$ is isomorphic to $G_\omega \oplus C_1$ where $C_1$ is the trivial group, but $G$ is non-trivial.

Second, Riggs's \cite{Riggs15} produced for each tree $T$ a group $G_T$ such that $G_T$ decomposes as a non-trivial direct sum if and only if $T$ has an infinite path. We will modify this argument to produce a group $H_T$ so that if $T$ has no infinite path then $H_T$ is indecomposable and cancellable, and so that if $T$ has an infinite path, then $H_T$ decomposes as $A \oplus B \oplus B \oplus B \oplus \cdots$ so that $G_T$ is not cancellable.

To show that $H_T$ is cancellable, we will ensure that $H_T$ has 1 in the stable range. Even in the non-finitely generated case, this is still a sufficient condition.

\begin{theorem}[Theorem 2 of \cite{Evans73}]
	Let $G$ be a torsion-free abelian group such that $E(G)$ has $1$ in the stable range. Then $G$ is cancellable.
\end{theorem}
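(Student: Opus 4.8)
The plan is to establish the general module-theoretic fact underlying the statement: a module whose endomorphism ring has stable range $1$ can be cancelled from direct sums. Concretely, suppose $G \oplus A \cong G \oplus B$ via an isomorphism $\varphi$; I must produce an isomorphism $A \to B$. Identify $\varphi$ with a $2\times 2$ matrix $\begin{pmatrix} a & b \\ c & d\end{pmatrix}$ of homomorphisms, where $a \in E(G)$, $b\colon A \to G$, $c\colon G \to B$, $d\colon A\to B$, so that $\varphi(g,x) = (ag + bx,\, cg + dx)$ and composition of maps between binary direct sums corresponds to matrix multiplication. The strategy is to left- and right-multiply $\varphi$ by elementary automorphisms of $G\oplus B$ and $G\oplus A$ — a Gaussian elimination — until $\varphi$ becomes diagonal, $\mathrm{diag}(\theta,\eta)$ with $\theta \in \mathrm{Aut}(G)$; the corner $\eta\colon A\to B$ is then the desired isomorphism.

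The key step, where the hypothesis enters, is to make the $(1,1)$-entry a unit of $E(G)$. Writing $\varphi^{-1} = \begin{pmatrix} a' & b' \\ c' & d'\end{pmatrix}$ and reading off the $(1,1)$-entry of $\varphi\varphi^{-1} = \mathrm{id}_{G\oplus B}$ gives $aa' + bc' = 1_G$; since $bc' \in E(G)$ this is precisely a relation $a\cdot a' + (bc')\cdot 1_G = 1_G$ in $E(G)$ of the form appearing in Definition \ref{def:sr1}, so there is $h \in E(G)$ with $a + (bc')h = a + b(c'h)$ a unit of $E(G)$. Setting $x := c'h\colon G\to A$, the map $\begin{pmatrix} 1_G & 0 \\ x & 1_A\end{pmatrix}$ is an automorphism of $G\oplus A$, and $\varphi\begin{pmatrix}1_G & 0 \\ x & 1_A\end{pmatrix}$ has $(1,1)$-entry $a + bx$, a unit. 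Replacing $\varphi$ by this product, we may assume $a \in \mathrm{Aut}(G)$.

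Once $a$ is invertible, the standard block elimination finishes the job: $\begin{pmatrix} 1 & 0 \\ -ca^{-1} & 1\end{pmatrix}\begin{pmatrix} a & b \\ c & d\end{pmatrix}\begin{pmatrix} 1 & -a^{-1}b \\ 0 & 1\end{pmatrix} = \begin{pmatrix} a & 0 \\ 0 & d - ca^{-1}b\end{pmatrix}$, where the outer factors are automorphisms of $G\oplus B$ and $G\oplus A$ respectively. Thus $\mathrm{diag}(a,\, d - ca^{-1}b)$ is an isomorphism $G\oplus A \to G\oplus B$, which forces $d - ca^{-1}b\colon A\to B$ to be an isomorphism. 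No step uses finite generation — or even torsion-freeness — so this covers the non-finitely-generated groups we need. I expect the only delicate point to be the bookkeeping around the stable range axiom: one must extract the relation from $\varphi\varphi^{-1}$ (not $\varphi^{-1}\varphi$) so that the unit produced is exactly the $(1,1)$-corner of a suitably modified $\varphi$; once the matrix picture is in place, nothing genuinely hard remains, which is why Evans's theorem holds with no restriction on $G$.
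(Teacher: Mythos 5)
Your proof is correct, and it is essentially the argument of the cited source: the paper itself imports this statement from Evans without proof, and Evans's proof is exactly this matrix computation --- use $aa' + bc' = 1_G$ from the $(1,1)$-entry of $\varphi\varphi^{-1}$ together with stable range $1$ of $E(G)$ to make the $(1,1)$-corner a unit via an elementary automorphism of $G \oplus A$, then diagonalize by block elimination. All the steps check out (the elementary matrices are genuine automorphisms, composition matches matrix multiplication in your convention, and a diagonal isomorphism with invertible $(1,1)$-block forces the $(2,2)$-block $d - ca^{-1}b \colon A \to B$ to be an isomorphism), and you are right that neither finite generation nor torsion-freeness is used.
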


\noindent This is not a necessary condition, as for example $\mathbb{Z}$ has the cancellation property but $E(\mathbb{Z}) = \mathbb{Z}$ does not have $1$ in the stable range.

We are now ready to prove:

{
\renewcommand{\thetheorem}{\ref{thm:main-inf}}
\begin{theorem}
	The index set of the class of cancellable torsion-free countable groups is $\Pi^1_1$ $m$-hard.
\end{theorem}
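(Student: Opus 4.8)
The plan is to reduce the $\Pi^1_1$ $m$-complete set of well-founded trees on $\omega$ to the index set of cancellable countable torsion-free abelian groups; this suffices for $\Pi^1_1$ $m$-hardness, and since the reduction will be an explicit uniform construction, it relativizes. So the task is to build, uniformly computably from an index for a computable tree $T \subseteq \omega^{<\omega}$, a countable torsion-free abelian group $H_T$ that is cancellable if and only if $T$ has no infinite path.

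The group $H_T$ will be a modification of the group $G_T$ of Riggs \cite{Riggs15}, which is obtained from $T$ by a Corner-style realization of endomorphism rings and admits a nontrivial direct-sum decomposition exactly when $T$ has an infinite path. I would adjust the construction in two ways. First, in the well-founded case I want $E(H_T)$ not merely to be idempotent-free but to have $1$ in the stable range in the sense of Definition \ref{def:sr1}; I would arrange this by carrying out the realization over a local base ring such as a localization $\Z_{(p)}$ of $\Z$, so that when $T$ has no infinite path, $E(H_T)$ is exactly this base ring, which --- being local --- has $1$ in the stable range. Second, and this is the heart of the matter, I want an infinite path through $T$ to produce not a single splitting but an infinite family of independent splittings, each peeling off a summand isomorphic to one fixed nonzero group $B$. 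The natural route is to make each level along a potential branch contribute its own summand --- for instance by running the construction over the tree of strictly increasing finite chains of nodes of $T$ (which is well-founded iff $T$ is) --- with the rank-one ``type'' data attached to the nodes chosen so that the summands coming from different levels of a single branch are forced to be mutually isomorphic. The upshot is: if $T$ has an infinite path, then $H_T \cong A \oplus B \oplus B \oplus B \oplus \cdots$ for a nonzero $B$.

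Given these two features, both directions are immediate. If $T$ is well-founded then $E(H_T)$ has $1$ in the stable range, so $H_T$ is cancellable by Evans's theorem (\cite{Evans73}, stated above). If $T$ is ill-founded then $H_T \cong A \oplus B \oplus B \oplus \cdots$ with $B \not\cong 0$, so $H_T \oplus B \cong A \oplus (B \oplus B \oplus \cdots) \oplus B \cong A \oplus (B \oplus B \oplus \cdots) \cong H_T \cong H_T \oplus 0$, and hence $H_T$ is not cancellable --- the same observation used above for $G_\omega$. As the construction is uniformly computable and relativizes to any oracle, this proves the theorem.

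The main obstacle is the second modification: performing Riggs's realization over the ``stacked'' tree while verifying that (a) in the well-founded case no endomorphisms beyond the intended local base ring are introduced, so that $E(H_T)$ retains $1$ in the stable range (and $H_T$ stays indecomposable); and (b) in the ill-founded case the splittings along a branch really are independent and the summands they split off are all isomorphic to a single $B$, rather than merely pairwise non-isomorphic or finite in number. This requires careful bookkeeping of how the endomorphism ring ``sees'' the tree and a judicious choice of the rank-one types assigned to the nodes.
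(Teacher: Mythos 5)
Your overall architecture coincides with the paper's: build $H_T$ from Riggs's construction so that well-foundedness of $T$ forces $E(H_T)$ to be a fixed subring of $\mathbb{Q}$ with $1$ in the stable range (whence cancellability by Evans's theorem), while an infinite path forces $H_T \cong A \oplus B \oplus B \oplus \cdots$ (whence failure of cancellation by the $G_\omega$ observation). Both halves of your ``both directions are immediate'' paragraph are exactly the paper's verification. But the theorem's content lives precisely in the construction you defer, and there are two concrete problems with what you propose.

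First, the base ring cannot be $\Z_{(p)}$ or any localization inverting cofinitely many primes: the construction needs infinitely many primes \emph{outside} the inverted set to encode the tree (the divisibilities tagging the $x$'s and $y$'s and the ``links''), and each link must itself become an infinite set of primes once every element of $H_T$ is required to be divisible by the inverted primes. The paper instead takes $Q$ infinite \emph{and co-infinite} with $\Z_Q$ having $1$ in the stable range (Arnold, Example 8.3(c)) and reserves disjoint infinite sets $\{t\}$, $P_i$, $R_i$ from the complement. Second, your mechanism for producing $\omega$ many isomorphic summands --- running Riggs over the tree of increasing chains of $T$ --- is not worked out, and it is not clear it yields \emph{independent} splittings: an infinite path in Riggs's $G_T$ gives one decomposition, and stacking the levels of a single branch does not obviously produce a direct sum of infinitely many copies of one fixed $B$. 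The paper's device is different and is the real content of the proof: take countably many copies $G^s_T$ of Riggs's group on disjoint prime sets and glue them all to a single new element $z$ via the divisibilities $\frac{z+x_0^s}{r}$ for $r \in R_s$. In the ill-founded case each copy independently splits off a summand $B^s_\pi$, all isomorphic, giving $H_T = A \oplus \bigoplus_s B^s_\pi \cong H_T \oplus B^0_\pi$; in the well-founded case the gluing through $z$ forces every endomorphism to act by one scalar $k$ on all copies simultaneously, which is what pins $E(H_T)$ down to $\Z_Q$. Without some such gluing, the countably many copies would admit permutation endomorphisms and projections, destroying the computation of $E(H_T)$ in the well-founded case. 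So the proposal identifies the right target and the right two black boxes, but leaves the essential construction and its endomorphism-ring verification as an acknowledged gap.
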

\addtocounter{theorem}{-1}
}
\begin{proof}
By \cite[Example 8.3(c)]{Arnold82}, there is a computable infinite and co-infinite subset $Q$ of primes such that $\Z_Q := \Z[\frac{1}{q^\infty}\mid q\in Q]$ has 1 in the stable range. Thus, we may find a computable family of disjoint sets of primes $\{t\}, \{P_i\}_{i\in\omega}, Q, \{R_i\}_{i\in \omega}$, such that $|P_i| = |Q| = |R_i| = \infty$, and $\Z_Q := \Z[\frac{1}{q^\infty}\mid q\in Q]$ has 1 in the stable range.

\paragraph{Construction} Given a tree $T$, we must construct $H_T$ so that $H_T$ is cancellable if and only if $T$ has no infinite path. The main idea is that we will construct countably many copies of Riggs' \cite[Theorem 6.1]{Riggs15} construction using the primes $P_i$ to get the groups $G_T^s$ for $s \in \omega$. However, we need to make the following modifications: First, every ``link" is a (disjoint) infinite set of primes instead of a single prime (indeed, only the links need to be replaced by infinite set of primes, but we will do this for all divisibility for consistency.) Second, we introduce a new element $z$ such that $t^\infty \mid z$, and introduce $\frac{z+x_0^s}{r}$ for every $s$ and $r\in R_s$; this ``links'' each $G^s_T$ together via $z$. Lastly, we will also make $q^\infty \mid h$ for every $q\in Q$ and $h\in H_T$.

We will write $\frac{x}{S^k}$ to mean the set of elements $\{\frac{x}{r^k}\mid r\in S\}$, and 
$\frac{x}{S^\infty}$ to mean the set of elements $\{\frac{x}{r^k}\mid r\in S, k\in \N\}$.

To construct $H_T$, we proceed as follows. The group $H_T$ will be a c.e.\ subgroup of $\mathbb{Q}^{\omega}$. Identify the standard $\mathbb{Z}$-basis of $\mathbb{Q}^{\omega}$ with the elements $\{x_i^s, y^s_i\mid i,s\in\omega\}$, $\{x^s_\sigma \mid \sigma \in \omega^{<\omega}, s\in \omega\}$, and $z$. $H_T$ will be generated by these elements together with certain other elements. We also introduce:
\begin{enumerate}
\item For $s, i\ge 0$ and $\sigma\in \omega^{<\omega}$, 
\[ \frac{x^s_i}{Q^\infty}, \frac{y^s_i}{Q^\infty}, \frac{x^s_\sigma}{Q^\infty}, \frac{z}{Q^\infty}, \text{ and } \frac{z}{t^\infty}. \]
\item For $s, i\ge 0$ and $\sigma\in \omega^{<\omega}$, 
\[ \frac{x^s_i}{P_{\langle 0,i\rangle}^\infty}, \frac{y^s_i}{P_{\langle 1,i\rangle}^\infty}, \text{ and } \frac{x^s_\sigma}{P_{\langle 2,\sigma\rangle}^\infty}. \]
\item For $s\ge 0$ and $0\le i<j$,
\[ \frac{x^s_i+x^s_j}{P_{\langle 3,\langle i,j \rangle\rangle}} \text{ and } \frac{y^s_i+y^s_j}{P_{\langle 4,\langle i,j  \rangle \rangle}}. \]
\item For $s, i\ge 0$ and $\sigma, \rho \in \omega^{<\omega}$, 
\[ \frac{x^s_i+x^s_\sigma}{P_{\langle 5,\langle i,\sigma \rangle \rangle}} \text{ and } \frac{x^s_\sigma+x^s_\rho}{P_{\langle 6,\langle \sigma,\rho \rangle \rangle}}. \]
\item For $s, i \ge 0$, 
\[ \frac{x^s_i+y^s_i}{P_{\langle 8,i\rangle}}. \]
\item For $s\ge 0$,
\[ \frac{z+x^s_0}{R_s}. \]
\end{enumerate}

\noindent We now enumerate $T$ in a way so that every node is enumerated only after its parent is enumerated. Whenever $\sigma$ gets enumerated, with $n = |\sigma|$, we introduce the following elements for every $s\ge 0$: (Note that in each case, the introduction of one of the element implies the existence of the other.)

\begin{enumerate}\setcounter{enumi}{6}
\item For $i \le n$, 
\[ \frac{y^s_i+x^s_{\sigma|_i}}{P_{\langle 1,i\rangle}^n} \text{ and }\frac{x^s_{\sigma|_i}}{P_{\langle 1,i\rangle}^n}. \]
\item For $i < n$, 
\[ \frac{(y^s_i+x^s_{\sigma|_i})+(y^s_n+x^s_{\sigma})}{P_{\langle 4,\langle i,n \rangle \rangle}} \text{ and } \frac{x^s_{\sigma|_i}+x^s_{\sigma}}{P_{\langle 4,\langle i,n \rangle \rangle}}.\]
\item With $n = |\sigma|$ as in the above paragraph, \[ \frac{y^s_n+x^s_\sigma}{P_{\langle 8,n\rangle}} \text{ and }\frac{x^s_n-x^s_\sigma}{P_{\langle 8,n\rangle}}.\]
\end{enumerate}
The $H_T$ is the subgroup of $\mathbb{Q}^\omega$ generated by all of these elements.


\paragraph{Verification} We need to show that $H_T$ is cancellable if and only if $T$ has no infinite path. 

\bigskip{}

($\Rightarrow$) Suppose first that $T$ has an infinite path $\pi$. We will show that $H_T$ is not cancellable by showing that $H_T \cong A\oplus \bigoplus_{i\in\omega}B$ for some subgroup $A$ and $B$. Let $B^s_\pi$ be the pure subgroup generated by the elements $y^s_i+x^s_{\pi|_i}$, and $A$ be the pure subgroup generated by the elements $z$, $x^s_i$, and $x^s_{\sigma}$ (for all $s$). We will prove the following claim:

\begin{claim}
$H_T = A\oplus \bigoplus_{s\in\omega}B^s_\pi$.
\end{claim}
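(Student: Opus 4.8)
The plan is to verify the claimed direct sum decomposition $H_T = A \oplus \bigoplus_{s \in \omega} B^s_\pi$ by checking the two standard conditions: first that the subgroups $A$ and $B^s_\pi$ generate $H_T$, and second that they are independent, i.e., their sum is direct. Both statements come down to working with the explicit generators listed in the construction, so the key is to set up the right bookkeeping. Before anything else I would make precise what $B^s_\pi$ is as a subgroup of $\mathbb{Q}^\omega$: it is the pure closure of the single element $w^s_i := y^s_i + x^s_{\pi|_i}$ over all $i$, and I would first observe that by generator (7) (applied along the path $\pi$) all the $w^s_i$ for a fixed $s$ are linked to each other by the primes $P_{\langle 1, i\rangle}$, and by generator (8) they are divisible by the $P_{\langle 4,\langle i, n\rangle\rangle}$; moreover by (2) each $w^s_i$ inherits the $P_{\langle 1, i \rangle}^\infty$-divisibility of $y^s_i$ and the $P_{\langle 2, \pi|_i\rangle}^\infty$-divisibility of $x^s_{\pi|_i}$. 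So $B^s_\pi$ is the subgroup of the rational span of $\{w^s_i : i \in \omega\}$ consisting of the elements realized in $H_T$; this is a rank-$\omega$ group, the same for every $s$ (and for every infinite path through $T$, once one sees the path only enters through the indices $\pi|_i$), which is why the notation $\bigoplus_s B$ is justified. I would similarly unpack $A$ as the pure closure of $\{z\} \cup \{x^s_i\} \cup \{x^s_\sigma\}$.

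Next I would prove that $A$ and the $B^s_\pi$ together generate $H_T$. This is a generator-by-generator check: each listed generator either already lies in $A$ (the ones in items (1) partially, (2) for the $x$'s, (3)–(6) which only involve $x$'s and $z$, (7)–(9) the ``implied'' second elements like $\frac{x^s_{\sigma|_i}}{P^n}$, $\frac{x^s_{\sigma|_i}+x^s_\sigma}{P}$, $\frac{x^s_n - x^s_\sigma}{P}$), or lies in some $B^s_\pi$ (the $\frac{y^s_i}{Q^\infty}$, $\frac{y^s_i}{P^\infty}$, the $\frac{y^s_i + y^s_j}{P}$, the $\frac{x^s_i + y^s_i}{P}$, and crucially the divisibility generators (7)–(9) evaluated along the path $\pi$), or is a $\mathbb{Z}$-combination of an element of $A$ and an element of $B^s_\pi$. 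The one point requiring care is generator (5), $\frac{x^s_i + y^s_i}{P_{\langle 8, i\rangle}}$: writing $x^s_i + y^s_i = (x^s_i - x^s_{\pi|_i}) + (y^s_i + x^s_{\pi|_i})$ splits it as (something in $A$, using generator (9) along $\pi$) plus $w^s_i \in B^s_\pi$, and one must check the $P_{\langle 8,i\rangle}$-divisibility is respected by this split — it is, because generator (9) gives $\frac{x^s_i - x^s_\sigma}{P_{\langle 8, n\rangle}}$ for the node $\sigma = \pi|_i$ with $n = i$, and generator (7)/(the $\frac{y^s_n + x^s_\sigma}{P_{\langle 8, n\rangle}}$ half of (9)) handles $w^s_i$. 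The off-path nodes $x^s_\sigma$ for $\sigma \notin \pi$ are all absorbed into $A$ and contribute nothing to the $B$ summands.

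Then I would prove independence: if $a + \sum_s b_s = 0$ in $H_T \subseteq \mathbb{Q}^\omega$ with $a \in A$ and $b_s \in B^s_\pi$ (only finitely many nonzero), then all terms vanish. Here I would use the coordinate structure of $\mathbb{Q}^\omega$ together with the fact that $A$ is generated by basis vectors $z, x^s_i, x^s_\sigma$ (and their $\mathbb{Q}$-multiples realized in $H_T$) while $B^s_\pi$ lives in the span of the $w^s_i = y^s_i + x^s_{\pi|_i}$. The point is that each $y^s_i$ appears with a nonzero coefficient only in $B^s_\pi$ (the $y$'s never occur in generators of $A$), so looking at the $y^s_i$-coordinate forces $b_s = 0$ for each $s$, and then $a = 0$. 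One should double-check that the purity ambient closure doesn't introduce $y$-components into $A$: since every generator involving a $y^s_i$ that is not of the form (essentially) $w^s_i$ or a combination of $w$'s can be rewritten, modulo $A$, as a combination of the $w$'s, the pure subgroup $A$ indeed has trivial $y$-components. The main obstacle I anticipate is this independence/purity argument — specifically making fully rigorous that the various ``implied'' generators in (7)–(9) and the splitting of generator (5) don't secretly force a $B$-summand to intersect $A$ nontrivially (which would happen if, say, some $w^s_i$ were divisible in $H_T$ only after adding an element of $A$ by a prime not accounted for). Organizing the divisibility generators $P_{\langle 1,i\rangle}, P_{\langle 4, \cdot\rangle}, P_{\langle 8, i\rangle}$ by which summand they ``belong to,'' and checking disjointness of these prime sets (guaranteed by the pairing-function indices being distinct), is what makes the bookkeeping close. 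Once the claim is established, the failure of cancellation follows immediately: $H_T \cong A \oplus \bigoplus_\omega B$ implies $H_T \oplus B \cong A \oplus \bigoplus_\omega B \cong H_T \cong H_T \oplus 0$ with $B$ nontrivial, so $H_T$ is not cancellable.
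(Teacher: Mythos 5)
Your proposal is correct and follows essentially the same route as the paper: verify directness and then check generator by generator that each listed element of $H_T$ splits as a sum of an $A$-part and $B^s_\pi$-parts, with the same key identities (e.g.\ $x^s_i+y^s_i=(x^s_i-x^s_{\pi|_i})+(y^s_i+x^s_{\pi|_i})$ via generator (9) along $\pi$). The only difference is one of emphasis: you spell out the independence argument via the $y^s_i$-coordinates, which the paper dismisses as ``not hard to see.''
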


Given the claim, the fact that $H_T \cong A\oplus \bigoplus_{i\in\omega}B$ for some $B$ follows easily. The only generator introduced in the construction that is dependent on $s$ is (7), but it lives in the $A$ component. Thus, the $B^s_\pi$ are isomorphic to each other by changing the superscript. Hence, $H_T \cong H_T \oplus \bigoplus_{i \in \omega} B^0_\pi$, and thus $H_T$ is not cancellable.

\begin{proof}[Proof of claim]
It is not hard to see that these are disjoint, so it suffices to show that every generator of $H_T$ can be decomposed as a sum of elements from these groups. This is clear for the generators that do not involve $y^s_i$. For the generators involving $y^s_i$, we have:
\begin{itemize}
\item For $s,i,k \ge 0$ and $q\in Q$,
\[ \frac{y^s_i}{q^k} = -\frac{x^s_{\sigma|_i}}{q^k} + \frac{y^s_i+x^s_{\sigma|_i}}{q^k}, \]
where the elements on the right hand side are introduced in (1).

\item For $s,i,k \ge 0$ and $p\in P_{\langle 1,i\rangle}$,
\[ \frac{y^s_i}{p^k} = -\frac{x^s_{\sigma|_i}}{p^k} + \frac{y^s_i+x^s_{\sigma|_i}}{p^k}, \]
where the elements on the right hand side are introduced in (7) for $\sigma = \pi|_{\max(i,k)}$.

\item For $s \ge 0$, $0\le i < j$, and $p \in P_{\langle 4,\langle i,j  \rangle \rangle}$,
\[ \frac{y^s_i+y^s_j}{p} 
= \frac{(y^s_i+x^s_{\sigma_i})+(y^s_j+x^s_\sigma)}{p} - \frac{x^s_{\sigma_i}+x^s_\sigma}{p}
,\]
where the elements on the right hand side are introduced in (8) for $\sigma = \pi|_j$.


\item For $s, i \ge 0$ and $p \in P_{\langle 8,i\rangle}$,
\[ \frac{x^s_i+y^s_i}{p}
= \frac{y^s_i+x^s_\sigma}{p}+\frac{x^s_i-x^s_\sigma}{p}
,\]
where the elements on the right hand side are introduced in (9) for $\sigma = \pi|_i$.

\item For the generators introduced in (7) to (9), the difference (or sum for (9)) of the two elements have the form that we have already discussed, and the second element, if exists, is in $A$. Thus, the first element, being a difference (or sum for (9)) of two decomposable elements, is also decomposable. \qedhere
\end{itemize}
\end{proof}

\bigskip{}

($\Leftarrow$) We now suppose the $T$ has no infinite path. We will show that $H_T$ is cancellable by showing that $E(H_T)\cong \Z_Q$ which, by choice of $Q$, has 1 in the stable range.

Let $\phi: H_T \to H_T$ be an endomorphism. We first observe that as any prime in $Q$ divides every element $h\in H_T$, a copy of $\Z_Q$ embeds in $E(H_T)$; given $k \in \Z_Q$, the multiply-by-$k$ map is an endomorphism. Now consider the possible values of $\phi(z)$. Since the subgroup of elements infinitely divisible by $t$ is the pure subgroup generated by $z$, and $z$ is only divisible by $t$ and the primes in $Q$, we must have $\phi(z) = kz$ for some $k \in \Z_{Q\cup\{t\}}$. 

Now consider $\phi(x_0^s)$. The set of elements infinitely divisible by the primes in $P_{\langle 0,0 \rangle}$ is the pure subgroup generated by $\{x_0^s\}_{s\in\omega}$. Thus, we must have that $\phi(x_0^s)$ is a linear combination of $x_0^{s_0}, x_0^{s_1}, \dots, x_0^{s_\alpha}$. However, $\phi(x_0^s) - kx_0^s = \phi(x_0^s+z) - \phi(z) -kx_0^s = \phi(x_0^s+z) - k(z+x_0^s)$ must be divisible by all the primes in $R_s$. As the only elements divisible by all the primes in $R_s$ are those in the pure subgroup generated by $z+x^s_0$, this forces $\phi(x_0^s) = kx_0^s$ for every $s$.

The above argument can also be used to show that $\phi(x_n^s) = kx_n^s$ and $\phi(x_\sigma^s) = kx_\sigma^s$ for all $n,s\in \omega$ and $\sigma \in \omega^{<\omega}$. Indeed, as the set of elements infinitely divisible by the primes in $P_{\langle 0,n\rangle}$ is the pure subgroup generated by $\{x_n^s\}_{s\in\omega}$, we must have that $\phi(x^s_n)$ is a linear combination of $x_n^{s_0}, x_n^{s_1}, \dots, x_n^{s_\alpha}$. However, $\phi(x_n^s) - kx_n^s = \phi(x_0^s+x_n^s) - k(x_0^s+x_n^s)$ must be divisible by all the primes in $P_{\langle 3,\langle 0,n\rangle \rangle}$. As the only elements divisible by all the primes in $P_{\langle 3,\langle 0,n\rangle \rangle}$ are the subgroup generated by $x_0^{r}+x_n^{r}$ for $r \in \omega$, this forces $\phi(x_n^s) = kx_n^s$ for every $n,s$. Similarly, we must have $\phi(x_\sigma^s) = kx_\sigma^s$ for all $\sigma$ and $s$.

Now we consider $\phi(y_n^s)$. As $y_n^s$ is infinitely divisible by the primes in $P_{\langle 1,n \rangle}$, we must have that $\phi(y_n^s)$ is also divisible by the primes in $P_{\langle 1,n \rangle}$. In the construction, the only time we introduce $P_{\langle 1,n \rangle}$ divisibility is to $y_n^r$ and possibly $x_\sigma^r$ for some $|\sigma| = n$ (and their linear combinations.) Thus, $\phi(y_n^s)$ must be a linear combination of $y_n^r$ and $x_\sigma^r$ for some $r\in \omega$ and $|\sigma| = n$.

However, $\phi(y_n^s) - ky_n^s = \phi(y_n^s + x_n^s) -k(y_n^s + x_n^s)$ is divisible by all the primes in $P_{\langle 8,n\rangle}$. Thus, $\phi(y_n^s) - ky_n^s$ must be a linear combination of $y_n^r + x_n^r$ and $y_n^r+x_\sigma^r$ for some $r\in \omega$ and $|\sigma| = n$. 

Combining the previous two paragraphs, we have that $\phi(y_n^s) = ky_n^s + \sum \ell_{r,\sigma}(y^r_n+x^r_\sigma)$ for every $n,s\in \omega$.

\begin{claim}
If $\phi(y_n^s) \neq ky_n^s$ for some $n$ and $s$, then there is an infinite path through $T$.
\end{claim}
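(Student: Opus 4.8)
The plan is to show that if $\phi(y_n^s) \neq k y_n^s$ for some $n$ and $s$, then the elements $x^r_\sigma$ appearing with nonzero coefficient in $\phi(y_n^s) - k y_n^s = \sum_{r,\sigma} \ell_{r,\sigma}(y^r_n + x^r_\sigma)$ (with $|\sigma| = n$) give rise to nodes of $T$ which, together with their ancestors, must form an infinite path. First I would fix $n, s$ with $\phi(y_n^s) \neq k y_n^s$, so at least one coefficient $\ell_{r,\sigma}$ is nonzero, say for some $r_0$ and some $\sigma_0$ with $|\sigma_0| = n$; note $\sigma_0 \in T$ since the divisibility element $\frac{x^r_\sigma}{P_{\langle 2,\sigma\rangle}^\infty}$ (and the $P_{\langle 1,n\rangle}$-divisibility tying $x^r_\sigma$ to $y^r_n$) is only introduced when $\sigma$ is enumerated into $T$.

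The key step is to propagate this downward through the tree, producing longer and longer initial-segment witnesses. The idea is to exploit the generators in (7): for each $i \le n$, $y^s_i + x^s_{\sigma|_i}$ is infinitely divisible by $P_{\langle 1,i\rangle}$, and applying the same analysis as in the paragraphs preceding the claim to $\phi(y_i^s)$ shows that $\phi$ acts on each $y_i^s$ in the analogous restricted way. Then, for $i < n$, I would use the generator from (8), which makes $(y^s_i+x^s_{\sigma|_i}) + (y^s_n + x^s_\sigma)$ divisible by $P_{\langle 4,\langle i,n\rangle\rangle}$; comparing $\phi$ of this with $\phi(y^s_i) + \phi(y^s_n)$ and using that the $P_{\langle 4,\langle i,n\rangle\rangle}$-divisible elements lie in the subgroup generated by the elements $(y^r_i + x^r_{\tau|_i}) + (y^r_n + x^r_\tau)$ for $\tau \in T$ with $|\tau| = n$ forces the node $\tau$ appearing in the decomposition of $\phi(y^s_n) - k y^s_n$ and the node $\tau|_i$ appearing in the decomposition of $\phi(y^s_i) - k y^s_i$ to be compatible — i.e.\ $\tau|_i$ really is an initial segment of some $\tau \in T$ of length $n$. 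Iterating, and using that $T$ has bounded... no: since a nonzero coefficient persists at each level $i \le n$, we get a node of $T$ at every length up to $n$, all lying along a single branch. Since the original choice of $n$ was for a fixed failure, to get an \emph{infinite} path I would instead show the failure propagates \emph{upward} in $n$: if $\phi(y_n^s) \neq k y_n^s$ witnessed by $\sigma_0 \in T$ with $|\sigma_0| = n$, then because $\sigma_0$ has a child $\sigma_0\conc j$ enumerated at some later stage (or rather, because the group only records $P_{\langle 1, n\rangle}^m$-divisibility of $x^r_{\sigma|_n}$ for $\sigma$ of length $\ge$ that power, via (7)), the element $\phi(y^s_n) - k y^s_n$ being divisible by all of $P_{\langle 1,n\rangle}$ forces $\sigma_0$ to have descendants of every length in $T$. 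Collecting these gives an infinite path.

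The main obstacle I anticipate is getting the bookkeeping exactly right: the element $y^s_i + x^s_{\sigma|_i}$ is only made $P_{\langle 1,i\rangle}^m$-divisible once a node $\sigma$ of length $\ge m$ extending $\sigma|_i$ appears in $T$, so the divisibility of $\phi(y^s_n) - k y^s_n$ by arbitrarily high powers of primes in $P_{\langle 1,n\rangle}$ is precisely the lever that forces longer and longer extensions in $T$ — I would need to check carefully that this divisibility can only be witnessed inside the subgroup generated by the relevant (7)-elements, ruling out contributions from the $Q$-divisibility or from elements with other superscripts, and that the nonzero coefficient cannot be "cancelled away" as we pass to higher powers. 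Once that localization-of-divisibility bookkeeping is pinned down, König's lemma on the resulting finitely-branching-along-the-witnesses set of nodes (or a direct construction of the branch) yields the infinite path through $T$.
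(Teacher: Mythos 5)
There is a genuine gap, and it is in the final step. Your ``upward'' mechanism --- that $\phi(y^s_n)-ky^s_n$ must be divisible by arbitrarily high powers of the primes in $P_{\langle 1,n\rangle}$, and that such divisibility of $y^{r_0}_n+x^{r_0}_{\sigma_0}$ is only ever introduced via the (7)-generators when nodes of length $\ge m$ extending $\sigma_0$ enter $T$ --- plausibly does show that $\sigma_0$ has descendants of every length in $T$. But that conclusion does not yield an infinite path: $T$ is a subtree of $\omega^{<\omega}$ and may be infinitely branching (it must be, for well-foundedness to be $\Pi^1_1$-complete), so K\"onig's lemma is unavailable, and a tree can contain nodes of every length while every branch is finite (e.g.\ the tree of all initial segments of the strings $\langle m,0,\dots,0\rangle$ of length $m$). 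Your hedge ``finitely-branching-along-the-witnesses'' does not rescue this: the length-$m$ witnesses for different $m$ need not lie along a common branch, and there is no pigeonhole over the infinitely many children of $\sigma_0$.

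The paper closes exactly this gap by propagating a \emph{specific nonzero coefficient} one level at a time rather than extracting arbitrarily long descendants all at once. Writing $\phi(y^s_{n+1})=ky^s_{n+1}+\sum\ell'_{r,\rho}(y^r_{n+1}+x^r_\rho)$, one uses that $\phi(y^s_n+y^s_{n+1})$ is divisible by the primes in $P_{\langle 4,\langle n,n+1\rangle\rangle}$, and that the only $P_{\langle 4,\langle n,n+1\rangle\rangle}$-divisible generators are $y^r_n+y^r_{n+1}$ and (via (8)) $(y^r_n+x^r_{\rho|_n})+(y^r_{n+1}+x^r_\rho)$ for $\rho\in T$ of length $n+1$; this forces $\ell_{r,\sigma}=\sum_{\rho:\,\rho|_n=\sigma}\ell'_{r,\rho}$, so $\ell_{r_0,\sigma_0}\neq 0$ guarantees some $\sigma_1\supset\sigma_0$ of length $n+1$ with $\ell'_{r_0,\sigma_1}\neq 0$, hence $\sigma_1\in T$ and $\phi(y^s_{n+1})\neq ky^s_{n+1}$. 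Iterating builds the branch $\sigma_0\subset\sigma_1\subset\cdots$ node by node. Note that this is precisely the compatibility-across-levels argument you sketched in your first paragraph with the $P_{\langle 4,\langle i,n\rangle\rangle}$ generators --- you just ran it downward (from $n$ to $i<n$), where it terminates after $n$ steps, instead of upward (from $n$ to $n+1$), where it produces the infinite path.
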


\begin{proof}
Assume that $\phi(y_n^s) \neq ky_n^s$ for some $n$ and $s$. Since $\phi(y_n^s) \neq ky_n^s$, fix $\ell_{r_0,\sigma_0} \neq 0$. This means that $y^{r_0}_n+x^{r_0}_{\sigma_0}$ is divisible by primes in $P_{\langle 8,n\rangle}$, so $\sigma_0 \in T$. 

Consider $\phi(y_{n+1}^s) = ky_{n+1}^s + \sum \ell'_{r,\rho}(y^{r}_{n+1}+x^{r}_{\rho})$. As $\phi(y_{n+1}^s+y_{n}^s)$ is divisible by the primes in $P_{\langle 4,\langle n,n+1\rangle\rangle}$, we have that $\phi(y_{n+1}^s+y_{n}^s)$ is a linear combination of $y^r_n+y^r_{n+1}$ and $x^r_{\rho|_n}+x^r_{\rho}$ for some $|\rho| = n+1$. This means that $\sum\limits_{|\rho|= n+1} \ell'_{r,\rho}(y^{r}_{n+1}+x^{r}_{\rho}) + \sum\limits_{|\sigma| = n} \ell_{r,\sigma}(y^r_n+x^r_\sigma)$ is a linear combination of $y^r_n+y^r_{n+1}$ and $x^r_{\rho|_n}+x^r_{\rho}$ for some $|\rho| = n+1$, and hence $\ell_{r,\sigma} = \sum\limits_{\rho:\rho|_n = \sigma}\ell'_{r,\rho}$.

Now we may choose a $\rho = \sigma_1$ such that $\ell'_{r_0,\sigma_1} \neq 0$. This means that $y^{r_0}_{n+1}+x^{r_0}_{\sigma_1}$ is divisible by primes in $P_{\langle 8,n\rangle}$, which implies that $\sigma_0 \subset\sigma_1 \in T$. Having $\ell'_{r_0,\sigma_1} \neq 0$ also implies that $\phi(y_{n+1}^s) \neq ky_{n+1}^s$. By applying the same argument iteratively, we may find $\sigma_0 \subset \sigma_1 \subset \sigma_2 \subset \dots$, which is an infinite path in $T$.
\end{proof}

By the claim, as there is no infinite path through $T$, we have $\phi(y_n^s) = ky_n^s$ for all $n,s\in\omega$. Thus $\phi(g) = kg$ for all $g\in H_T$. However, the only primes dividing all elements in $H_T$ are the primes in $Q$. Thus $k\in \Z_Q$, and $E(H_T) = \Z_Q$, which has 1 in the stable range. So $H_T$ is cancellable.
\end{proof}

\bibliography{References}
\bibliographystyle{alpha}

\end{document}